\newcommand{\edit}[1]{{#1}}
\newcommand{\Edit}[1]{}
\newcommand{\Z}{{\mathbb Z}}
\newcommand{\Q}{{\mathbb Q}}
\newcommand{\PP}{{\mathbb P}}
\newcommand{\G}{{\mathbb G}}
\newcommand{\Kbar}{{\overline{K}}}
\newcommand{\Lbar}{{\overline{L}}}
\newcommand{\Xbar}{{\overline{X}}}
\newcommand{\Ybar}{{\overline{Y}}}
\newcommand{\Vbar}{{\overline{V}}}
\newcommand{\calA}{{\mathcal A}}
\newcommand{\calB}{{\mathcal B}}
\newcommand{\calC}{{\mathcal C}}
\newcommand{\calI}{{\mathcal I}}
\newcommand{\calJ}{{\mathcal J}}
\newcommand{\calL}{{\mathcal L}}
\newcommand{\To}{\longrightarrow}
\DeclareMathOperator{\Map}{Map}
\DeclareMathOperator{\Hom}{Hom}
\DeclareMathOperator{\Norm}{Norm}
\DeclareMathOperator{\divv}{div}
\DeclareMathOperator{\Gal}{Gal}
\DeclareMathOperator{\Cor}{Cor}
\DeclareMathOperator{\Br}{Br}
\DeclareMathOperator{\Div}{Div}
\DeclareMathOperator{\Princ}{Princ}
\DeclareMathOperator{\Pic}{Pic}
\DeclareMathOperator{\Alb}{Alb}
\DeclareMathOperator{\Jac}{Jac}
\DeclareMathOperator{\HH}{H}
\DeclareMathOperator{\NS}{NS}
\DeclareMathOperator{\Spec}{Spec}
\DeclareMathOperator{\Aut}{Aut}
\DeclareMathOperator{\PGL}{PGL}
\newcommand{\kk}{\mathbf{k}}
\newcommand{\ind}{\edit{I}}
\newcommand{\per}{\edit{P}}
\newcommand{\diw}{\operatorname{div}}
\newcommand{\defi}[1]{\textsf{#1}} 
\newtheorem{Theorem}{Theorem}[section]
\newtheorem{Lemma}[Theorem]{Lemma}
\newtheorem{Proposition}[Theorem]{Proposition}
\newtheorem{Corollary}[Theorem]{Corollary}
\newtheorem{Definition}[Theorem]{Definition}
\newtheorem{Example}[Theorem]{Example}
\newtheorem{Remark}[Theorem]{Remark}
\numberwithin{equation}{section}
\begin{document}

\title{Relative Brauer groups of torsors of period two}

\author{Brendan Creutz}
\address{Department of Mathematics and Statistics, University of Canterbury, Private Bag 4800, Christchurch 8140, New Zealand}
\email{brendan.creutz@canterbury.ac.nz}
\urladdr{http://www.math.canterbury.ac.nz/\~{}bcreutz}

\maketitle

\begin{abstract}
We consider the problem of computing the relative Brauer group of a torsor of period two under an elliptic curve. We show how this problem can be reduced to finding a set of generators for the group of rational points on the elliptic curve. This extends work of Haile and Han to the case of torsors with unequal period and index. Our results also apply to torsors under higher dimensional abelian varieties. Several numerical examples are given.
\end{abstract}

\section{Introduction}

	For a geometrically integral variety $V$ over a field $K$, the relative Brauer group $\Br(V/K)$ is the subgroup of the Brauer group $\Br(K)$ consisting of classes which are split by the function field $\kk(V)$ of $V$. The index of $V$ is the greatest common divisor of the degrees of field extensions $K'/K$ for which $V$ possesses a $K'$-rational point. These two invariants are closely related. For example, the relative Brauer group can only be nontrivial when the index is greater than one. The simplest case in which this occurs is a conic without a rational point, for which the relative Brauer group is generated by the class of the conic in $\Br(K)$.

	For curves with nontrivial Jacobian, the relative Brauer group need not be cyclic. Here the simplest examples occur among genus one curves of index $2$, which (assuming the characteristic of $K$ is not $2$) are curves admitting a model of the form $y^2 = g(x)$ with $g(x) \in K[x]$ a square free quartic polynomial. For such a curve $V$ results of Shick~\cite{Shick}, Han~\cite{Han} and Haile and Han~\cite{HaileHan} reduce the problem of determining the relative Brauer group to determining the group of $K$-rational points on the Jacobian $J$ of $V$. Their approach can be explained by an old result of Lichtenbaum~\cite{Lichtenbaum69}: corresponding to any torsor $V$ there is a canonically defined class in $\calA_V \in \Br(J)$ such that the evaluation map induces a homomorphism $J(K) \to \Br(V/K)$ which, as noted by \c Ciperiani and Krashen~\cite{CipKrash}, will be surjective when the index of $V$ and the order of $\calA_V$ (called the period of $V$) coincide.
	
	In \cite{HaileHan} Lichtenbaum's result is made explicit by constructing a representative for $\calA_V$ as an Azumaya algebra over the coordinate ring of $J$. Their work was motivated by earlier work of Haile~\cite{Haile}, who showed that the Clifford algebra of a binary cubic form $F(x,y)$ plays an analogous role for the genus one curve $z^3 = F(x,y)$. This has been further developed by Kuo~\cite{Kuo} to construct the algebra $\calA_V$ corresponding to an arbitrary cubic curve, i.e., a torsor of period and index dividing $3$. A method applicable to torsors of cyclic type (necessarily having equal period and index) is given in \cite{CipKrash} and applied in~\cite{HaileHanWadsworth}.

	For a hyperelliptic curve $X$, Creutz and Viray~\cite{CVCurves} have recently given an explicit means of constructing unramified central simple $\kk(X)$-algebras representing $2$-torsion elements in the Brauer group of $X$. Our primary purpose here is to explore applications of these algebras to the computation of relative Brauer groups. We show that each of these algebras corresponds to a torsor $V$ under the Jacobian of $X$ in such a way that the specializations of the algebra give elements of $\Br(V/K)$. When $X$ is a genus one curve, the corresponding torsors are intersections of quadric hypersurfaces in $\PP^3$, and every torsor of index dividing $4$ is given this way, including those with unequal period and index. The results also apply to higher genus hyperelliptic curves allowing for computation of relative Brauer groups of higher dimensional torsors under abelian varieties. In Section~\ref{sec:Examples} we give several numerical examples illustrating the practicality of the method.

	\subsection{Statement of the results}

	Throughout the paper we consider a smooth and projective hyperelliptic curve $X$ defined over a field $K$ of characteristic different from $2$. We fix an equation $y^2 = cf(x)$ defining $X$, where $c \in K^\times$ and $f(x) \in K[x]$ is a monic square free polynomial. We set $L = K[\theta]/f(\theta)$ and let $J = \Jac(X)$ denote the Jacobian of $X$. For any $\ell \in L^\times$, let $\calA_\ell''$ be the $L(x)$-algebra obtained by adjoining anticommuting square roots of $x-\theta$ and $\ell$ to $L(x)$. The corestriction $\calA'_\ell := \Cor_{L(x)/K(x)}(\calA''_\ell)$ is a tensor product of quaternion algebras over $K(x)$ which can be explicitly computed using Rosset-Tate reciprocity (see~\cite[Proposition 3.1]{CVCurves}). In \cite{CVCurves} it is shown that, under suitable conditions on the norm of $\ell$, the algebra $\calA'_\ell$ pulls back to an unramified central simple $\kk(X)$-algebra $\calA_\ell := \calA'_\ell\otimes_{K(x)}\kk(X)$.

	\subsection{Elliptic curves}
		\label{subsec:EC}
	When $\deg(f) = 3$, $X = J$ is an elliptic curve and every torsor of period $2$ under $X$ is given by a classical construction associating to an element $\ell \in L^\times$ of square norm the genus one curve $V_\ell \subset \PP^3_K$ defined by
	\begin{equation}\label{eq:V_ell}
			V_\ell : Q_\ell^{(1)}(u,v,w) + t^2 = Q_\ell^{(2)}(u,v,w) = 0\,,		
	\end{equation}
	where, for $0 \le i \le 2$, $Q_\ell^{(i)}(u,v,w) \in K[u,v,w]$ is the ternary quadratic form uniquely determined by requiring that $\sum \theta^iQ_\ell^{(i)} = \ell(u + \theta v + \theta^2 w)^2$. See \cite[Section 15]{CasselsLectureNotes}. \edit{In the appendix we prove that every torsor of period $2$ arises in this way.}

	 The following theorem, proven in Section~\ref{subsec:EC2}, reduces computation of the relative Brauer group of $V_\ell$ to a computation of a basis for $X(K)/2X(K)$.

	\begin{Theorem}
		\label{thm:Elliptic}
		Let $V$ be a genus one curve of period $2$ and suppose $X:y^2 = f(x)$ is a Weierstrass model for $X = \Jac(V)$ (i.e., $\deg(f) = 3$). Then 
		\begin{enumerate}
			\item There exist $\ell \in L^\times$ such that $V \simeq V_\ell$.
			\item Evaluation of $\calA_\ell$ induces an exact sequence
				\[
					\frac{X(K)}{2X(K)} \stackrel{\calA_\ell}\To \Br(V/K) \To \frac{\per(V)}{\ind(V)} \To 0\,,
				\]
				where $\per(V)$ and $\ind(V)$ are the period and index subgroups in Definition~\ref{def:perind}.\Edit{As suggested by the referee the notation in Definition \ref{def:perind} has been changed.}
			\item $\Br(V/K)$ is generated by $\left\{ \calA_\ell(P) \;:\; P \in X(K) \right\}$ and the class $[C_\ell] \in \Br(K)$ corresponding to the conic $C_\ell \subset \PP^2_K$ defined by $Q^{(2)}_\ell(u,v,w)=0$.
			\item $\per(V) = \ind(V)$ if and only if $[C_\ell] \in \calA_\ell(X(K))$.
		\end{enumerate}
	\end{Theorem}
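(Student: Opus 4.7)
Part (1) is established in the appendix, so I focus on (2)--(4). The strategy for (2) is to identify the class $[\calA_\ell] \in \Br(X)$ with Lichtenbaum's canonical class $\calA_V \in \Br(J) = \Br(X)$ attached to the torsor $V = V_\ell$. Granted this identification, evaluation $P \mapsto \calA_\ell(P)$ lands in $\Br(V/K)$ by the general property of the Lichtenbaum class, and factors through $X(K)/2X(K)$ because $\calA_V$ has order equal to $\per(V) = 2$. Exactness at $\Br(V/K)$ follows from the framework of \cite{CipKrash}: the image of evaluation is identified with $\ind(V)$, and the cokernel measures exactly the obstruction $\per(V)/\ind(V)$.

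For (3), the decisive observation is that the second defining equation in \eqref{eq:V_ell} exhibits $V_\ell$ as a double cover of the conic $C_\ell : Q^{(2)}_\ell(u,v,w) = 0$ via the projection $(u:v:w:t) \mapsto (u:v:w)$. In particular $C_\ell$ acquires a $\kk(V_\ell)$-rational point, so $[C_\ell] \in \Br(V_\ell/K)$. Given (2), the group $\Br(V_\ell/K)$ is generated by $\calA_\ell(X(K))$ together with any lift of a generator of $\per(V)/\ind(V)$; one then must show that $[C_\ell]$ provides such a lift. In the period $2$ setting the target quotient is cyclic, and I would verify generation by computing the residue or local invariants of $[C_\ell]$ at a place where $V$ fails to have a rational point.

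Part (4) is then an immediate consequence of (2) and (3): $[C_\ell] \in \calA_\ell(X(K))$ holds iff its image in $\per(V)/\ind(V)$ vanishes, iff $\per(V)/\ind(V) = 0$ (since $[C_\ell]$ generates this quotient by (3)), iff $\per(V) = \ind(V)$.

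The principal technical obstacle is the identification $[\calA_\ell] = \calA_V$. The algebra $\calA_\ell$ in \cite{CVCurves} is constructed by corestriction from the $L(x)$-algebra $\calA''_\ell$, whereas $\calA_V$ is defined abstractly through Lichtenbaum's cohomological recipe. Bridging these requires matching the role played by $\ell$ both in the Cassels description of $V_\ell$ in \eqref{eq:V_ell} and in the corestricted algebra, and verifying that the unramifiedness of $\calA_\ell$ on $X$ (ensured by the square-norm hypothesis on $\ell$) corresponds precisely to the cohomological condition characterizing $\calA_V$. Once this bridge is in place, the remainder of the argument should be essentially formal bookkeeping using the Hochschild--Serre sequence and \cite{CipKrash}.
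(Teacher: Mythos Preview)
Your treatment of (2) is on target and matches the paper: the identification of $[\calA_\ell]$ with the Lichtenbaum/\c Ciperiani--Krashen class $\calA_{V_\ell}$ is established in Section~4.1 (via the commutative diagram~\eqref{eq:oddcase} together with the results of \cite{CVCurves}), and then Corollary~\ref{cor:Curves} gives the exact sequence. Your derivation of (4) from (2) and (3) is also fine.

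The genuine gap is in (3). You correctly observe that the projection $V_\ell \to C_\ell$ shows $[C_\ell]\in\Br(V_\ell/K)$, but your plan to verify that $[C_\ell]$ generates the quotient $\per(V)/\ind(V)$ by ``computing residues or local invariants at a place where $V$ fails to have a rational point'' does not work: the theorem is stated over an arbitrary field of characteristic $\ne 2$, so there need not be any places, and even over a global field this would only give an ad hoc check in examples rather than a proof. What is missing is a structural link between $[C_\ell]$ and the map $\mathfrak{a}_{V_\ell}$ of~\eqref{eq:frakaX}. The paper supplies this in Lemma~\ref{lem:conic}: pulling back a point of $C_\ell$ along $\pi:V_\ell\to C_\ell$ gives a $G_K$-invariant degree-$2$ divisor class $[D]$ on $V_\ell$, and the Brauer--Severi interpretation of $\mathfrak{a}_{V_\ell}$ (Lemma~\ref{lem:frakaX2}) identifies $\mathfrak{a}_{V_\ell}([D])$ with $[C_\ell]$. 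Since $[D]$ has degree $2$ and $V_\ell$ has period $2$, $[D]$ generates $\per(V_\ell)$ in $\NS(\Vbar_\ell)\simeq\Z$, so its image $[C_\ell]$ generates $\Br(V_\ell/K)/\Br^0(V_\ell/K)\simeq\per(V_\ell)/\ind(V_\ell)$ via~\eqref{eq:perind}. This is the idea you need; once you have it, (3) and (4) follow immediately as in your sketch.
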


	The connection between these torsors and those considered in \cite{HaileHan} is as follows. Projecting away from the point $u=v=w=0$ in $\PP^3_K$ induces a degree $2$ morphism $V_\ell \to C_\ell$, where $C_\ell \subset \PP^2_K$ is the conic defined by $Q_\ell^{(2)}(u,v,w) = 0$. If $C_\ell$ has a $K$-rational point, then $C_\ell \simeq \PP^1_K$ and so $V_\ell$ is a double cover of $\PP^1_K$ with a model of the form $y^2 = $ quartic. However, as first shown by Cassels \cite{CasselsV}, it is possible that $V_\ell$ has no rational points over any quadratic extension of $K$, in which case $\per(V_\ell)\ne\ind(V_\ell)$ and no degree $2$ map to $\PP^1_K$ exists. In fact, the torsors $V_\ell$ with equal period and index are not generic; they correspond to those $\ell \in L^\times$ whose classes in $L^\times/L^{\times 2}$ can be represented by a linear polynomial in $K[\theta] = L$ (This follows easily from~\eqref{eq:V_ell}; see also~\cite[Theorem 13]{CremonaFisher}). Theorem~\ref{thm:Elliptic} therefore recovers results of \cite{HaileHan}, and extends them to all torsors of period $2$ under an elliptic curve.
	
	\subsection{Genus one curves}\label{subsec:G1}
When $X : y^2 = cf(x)$ is defined by a quartic polynomial $cf(x)$, a construction analogous to~\eqref{eq:V_ell} yields an intersection of quadric surfaces $V_\ell \subset \PP^3$. Given $\ell \in L^\times$ such that $N_{L/K}(\ell)$ is a square times the leading coefficient of $cf(x)$, one defines
	\begin{equation}\label{eq:V_ell2}
		V_\ell : Q_\ell^{(2)}(t,u,v,w) = Q_\ell^{(3)}(t,u,v,w) = 0,
	\end{equation}
	where, for $0\le i \le 3$, $Q_\ell^{(i)}(t,u,v,w)$ is the quadratic form defined by 
	\[
		\sum_{0\le i \le 3} \theta^iQ_\ell^{(i)}(t,u,v,w) = \ell(t + \theta u + \theta^2 v + \theta^3w)^2.
	\]
	We associate to $V_\ell$ the central simple $\kk(X)$-algebra $\calA_\ell$ constructed above. In Section \ref{subsec:C4} we prove the following.

	\begin{Theorem}\label{thm:index4}
		Let $V$ be a genus one curve of \edit{period and}\Edit{As noted by the referee, this assumption is used in the proof. And, indeed, the result as stated is not quite true without this assumption! However, the only case excluded by the new assumption is that when $V$ has period $2$, which is already covered in Theorem 1.1.} index $4$. Then there exists a quartic polynomial $cf(x) \in K[x]$ and $\ell \in L^\times$, where $L = K[\theta]/f(\theta)$ such that:
		\begin{enumerate}
			\item $\Jac(V) \simeq \Jac(X)$ where $X : y^2 = cf(x)$.
			\item $2V = X$ in the Weil-Ch\^atelet group of the Jacobian.
			\item $V \simeq V_\ell$.
			\item Evaluation of $\calA_\ell$ on degree zero $K$-rational divisors of $X$ gives rise to an exact sequence
				\[
					0 \to \Pic^0(V) \stackrel{\iota}\to \Pic^0(X) \stackrel{\calA_\ell}\To \Br(V/K) \stackrel{2}\To \Br(X/K) \To 0\,,
				\]
		where $\iota$ is induced by the inclusion $\Pic^0(V) \subset \Pic^0(\Vbar) = \Pic^0(\Xbar)$ and the last map is induced by multiplication by $2$ in $\Br(K)$.
		\end{enumerate}
	\end{Theorem}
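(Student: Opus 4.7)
Set $\xi_V = [V] \in \HH^1(K,\Jac(V))$; by hypothesis $\xi_V$ has order $4$, so $\xi_X := 2\xi_V$ has order exactly $2$. Take $X$ to be the torsor with class $\xi_X$; then $\Jac(X) = \Jac(V)$ canonically and $[X] = 2[V]$, giving (2). For (1), I exhibit a model $y^2 = cf(x)$ of $X$ with $f \in K[x]$ monic of degree $4$: since $\ind(V)=4$, $V$ embeds in $\PP^3$ via a $K$-rational degree-$4$ divisor class as the base locus of a pencil of quadrics, and the double cover of the pencil's $\PP^1$ (branched over the four singular quadrics) is a model of $X$ carrying a canonical degree-$2$ morphism to $\PP^1$. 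Hence $\ind(X) = 2 = \per(X)$, and the morphism provides the hyperelliptic equation.

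\textbf{Part (3).} Torsors $V'$ under $E=\Jac(V)$ with $2[V']=[X]$ form a principal homogeneous space under $\HH^1(K,E[2])$ above $[X]$. Using the Weil-pairing description $\HH^1(K, E[2]) \hookrightarrow L^\times / K^\times (L^\times)^2$, such lifts correspond to $\ell \in L^\times$ with $N_{L/K}(\ell) \in c \cdot (K^\times)^2$. Construction~\eqref{eq:V_ell2} realizes the torsor attached to $\ell$ as $V_\ell \subset \PP^3$, so $V \simeq V_\ell$ for a suitable $\ell$.

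\textbf{Part (4).} Let $\calA_V \in \Br(E)$ denote the Lichtenbaum class of $V$; by \cite{CVCurves}, $\calA_\ell$ is an unramified representative of $\calA_V$, so evaluation of $\calA_\ell$ at $K$-rational divisors matches Lichtenbaum's evaluation map. Since $[X] = 2[V]$, we have $\calA_X = 2\calA_V$ in $\Br(E)$. The Ciperiani--Krashen surjectivity applied to both $V$ and $X$ (valid because both satisfy $\per=\ind$, the latter by Part (1)) yields isomorphisms
\[
\calA_V(\cdot) : E(K)/\Pic^0(V) \stackrel{\sim}{\to} \Br(V/K) \quad \text{and} \quad \calA_X(\cdot) : E(K)/\Pic^0(X) \stackrel{\sim}{\to} \Br(X/K).
\]
The relation $\calA_X(P) = 2\calA_V(P)$ forces $\Pic^0(V) \subseteq \Pic^0(X)$, so $\iota$ is a well-defined injection. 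Exactness at $\Pic^0(X)$ then amounts to $\ker(\calA_V|_{\Pic^0(X)}) = \Pic^0(V)$. For $\alpha = \calA_V(P) \in \Br(V/K)$, one has $2\alpha = \calA_X(P)$, so $\alpha$ lies in the kernel of the doubling map iff $P \in \Pic^0(X)$, i.e., iff $\alpha$ lies in the image of $\Pic^0(X)$; this gives exactness at $\Br(V/K)$. Finally, every class in $\Br(X/K)$ has the form $\calA_X(P) = 2\calA_V(P)$, yielding surjectivity of the doubling map.

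The principal obstacle is Part (3): matching the cohomological $\ell \in L^\times / K^\times (L^\times)^2$ with the explicit $\ell$ appearing in~\eqref{eq:V_ell2}, i.e., verifying that the quadratic forms $Q_\ell^{(i)}$ cut out precisely the torsor prescribed by the $2$-descent class. A secondary technical point is confirming $\per(X) = \ind(X) = 2$ so that Ciperiani--Krashen applies to $X$; this follows from the degree-$2$ pencil map constructed in Part (1), since pulling back a $K$-point of $\PP^1$ produces a $K$-rational divisor of degree $2$ on $X$.
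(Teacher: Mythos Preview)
Your argument is correct and follows essentially the same path as the paper. Both proofs reduce part~(4) to the single key relation $\mathfrak{a}_X(P)=2\,\mathfrak{a}_V(P)$ for $P\in J(K)$, which comes from bilinearity of the Tate pairing together with $[X]=2[V]$; from this both derive $\Pic^0(V)\subseteq\Pic^0(X)$, identify the cokernel map with multiplication by~$2$, and use $\per=\ind$ for $V$ and for $X$ to replace $\Br^0$ by $\Br$.

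The only difference is packaging. The paper first proves a general exact sequence (Proposition~\ref{thm:Main2}) for a Brauer-like torsor, with $\calA_\ell$ viewed as a class in $\Br(X)$, and then specializes; you instead invoke the \c Ciperiani--Krashen isomorphism (the paper's Theorem~\ref{thm:SpecializationOfBrauerClasses}) directly for both $V$ and $X$, working with the Lichtenbaum class on the Jacobian. These are equivalent via the compatibility of Lichtenbaum's pairings~\eqref{eq:LichtenbaumPairings}. One terminological point worth tightening: in the paper's conventions the class you call ``$\calA_V\in\Br(E)$'' is denoted $\calB_V$, while $\calA_V$ (and hence $\calA_\ell$) lives in $\Br(X)$ and is only determined modulo the image of $\Br(K)$, so its evaluation is a~priori defined only on $\Pic^0(X)$; your passage to all of $E(K)$ is exactly the step supplied by Lemma~\ref{lem:Tate=frakaX}.
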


	The first three statements of this theorem are well known. If $V$ has index dividing $4$, then the complete linear system associated to any $K$-rational divisor of degree $4$ gives a model for $V$ as an intersection of quadrics in $\PP^3_K$. The quartic polynomial is $\det\left( M_1x - M_2\right)$, where $M_1$ and $M_2$ are the symmetric matrices corresponding to these quadrics, and a formula for $\ell$ is given in~\cite[Section 5]{Fisher4d}.

	It is worth noting that while Theorem~\ref{thm:index4} may not yield enough to completely determine $\Br(V/K$), Theorems~\ref{thm:Elliptic} and~\ref{thm:index4} together allow one to determine whether a given $K$-rational point on the Jacobian is represented by a $K$-rational divisor of degree $0$ on $V$. See Section~\ref{subsec:period4} for an example.

	\subsection{Hyperelliptic curves}
	When the degree of $cf(x)$ is arbitrary, work of Cassels~\cite{CasselsGenus2}, Schaefer~\cite{Schaefer} and Poonen-Schaefer~\cite{PoonenSchaefer} recalled in Section~\ref{sec:HyperellipticCurves} similarly allows one to associate a torsor $V_\ell$ of period $2$ under $J$ to any $\ell \in L^\times$ of square norm. Consequently, to any such torsor we may associate an algebra $\calA_\ell$. The following more general version of~\ref{thm:Elliptic} is proven in Section~\ref{sec:HyperellipticCurves}.

	\begin{Theorem}
		\label{thm:Hyperelliptic}
		Suppose $V_\ell$ is the torsor of period $2$ under the Jacobian of a hyperelliptic curve $X$ corresponding to an element $\ell \in L^\times$ of square norm.  Suppose that $\Br(X/K) = 0$ and let $\calA_\ell$ be the corresponding $\kk(X)$-algebra. Then evaluation of $\calA_\ell$ on degree zero $K$-rational divisors of $X$ induces an exact sequence
		\[
			J(K)/2J(K) \stackrel{\calA_\ell}\To \Br(V_\ell/K) \To \frac{\per(V_\ell)}{\ind(V_\ell)} \To 0\,,
		\]
		where $\per(V_\ell)$ and $\ind(V_\ell)$ are the period and index subgroups defined in Definition~\ref{def:perind}.
	\end{Theorem}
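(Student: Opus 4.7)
\emph{Proof plan.}
The strategy is to reduce Theorem~\ref{thm:Hyperelliptic} to Lichtenbaum's exact sequence for a torsor under an abelian variety, by identifying the map induced by evaluation of $\calA_\ell$ on degree-zero $K$-rational divisors of $X$ with the evaluation at $K$-points of the canonical Lichtenbaum class $\calA_{V_\ell}\in \Br(J)$. Both $\calA_\ell$ and $\calA_{V_\ell}$ are built from the same datum $\ell\in L^\times$---the former via the corestriction $\Cor_{L(x)/K(x)}$ of quaternion symbols as in~\cite{CVCurves}, the latter via the Schaefer--Poonen-Schaefer $2$-descent that constructs $V_\ell$---so the heart of the proof is to show that these two routes yield compatible Brauer classes.

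First I would verify that evaluation of $\calA_\ell$ on the group of degree-zero $K$-rational divisors of $X$ descends to a well-defined homomorphism $J(K)/2J(K)\to \Br(K)$. Since $\calA_\ell$ is unramified on $X$ by~\cite{CVCurves}, specialization at each $K$-rational point yields an element of $\Br(K)$, and evaluation extends linearly to divisors. A standard pairing argument shows that $\calA_\ell$ applied to a principal divisor lies in $\Br(X/K)$, which vanishes by hypothesis; hence evaluation descends to $\Pic^0(X) = J(K)$. Because $N_{L/K}(\ell)$ is a square, $[\calA_\ell]$ is $2$-torsion in $\Br(X)$, so each value $\calA_\ell(P)$ is $2$-torsion in $\Br(K)$ and the induced map factors through $J(K)/2J(K)$ by linearity.

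The crux is then to identify, after pullback along a suitable Abel-Jacobi morphism $\iota\colon X \to J$, the class $[\calA_\ell]\in \Br(X)$ with $\iota^*\calA_{V_\ell}$. Granting this, evaluation of $\calA_\ell$ on $D\in \Div^0(X)$ equals $\calA_{V_\ell}(\iota_*D)\in \Br(V_\ell/K)$, and Lichtenbaum's exact sequence~\cite{Lichtenbaum69}
\[
	J(K)/\per(V_\ell) J(K) \stackrel{\calA_{V_\ell}}\To \Br(V_\ell/K) \To \per(V_\ell)/\ind(V_\ell) \To 0,
\]
specialized to $\per(V_\ell) = 2$, delivers the sequence claimed in the theorem.

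The main obstacle is therefore the identification $\iota^*\calA_{V_\ell} = [\calA_\ell]$. The two sides are functorial in $\ell$ but defined by rather different procedures---descent on one hand, corestriction of quaternionic symbols on the other. The natural approach is to reduce both to a common Galois $2$-cocycle built from $\ell$ and to match them using the compatibility of corestriction with cup product. Only after this identification is it clear why the evaluation lands in $\Br(V_\ell/K) \subseteq \Br(K)$ rather than merely in $\Br(K)$, since the defining property of $\calA_{V_\ell}$---trivialization upon pullback to $V_\ell$---is not manifest from the corestriction formula defining $\calA_\ell$.
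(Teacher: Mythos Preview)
Your strategy is close to the paper's but contains a gap that the paper is careful to avoid. You propose to identify $[\calA_\ell]\in\Br(X)$ with $\iota^*\calA_{V_\ell}$, where $\calA_{V_\ell}$ (in your notation) is the canonical Lichtenbaum class in $\Br(J)$ and $\iota\colon X\to J$ is an Abel--Jacobi embedding, and then to invoke the \c Ciperiani--Krashen exact sequence. The difficulty is that such an $\iota$ requires a $K$-rational point on $X$. For odd hyperelliptic curves this is harmless, since $X$ has the rational Weierstrass point $0_X$ above $\infty$; but Theorem~\ref{thm:Hyperelliptic} is stated for arbitrary hyperelliptic $X$, and in the even-degree case $X$ need not have any $K$-point even under the hypothesis $\Br(X/K)=0$. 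Your reduction via $\iota_*$ therefore breaks down precisely in the cases that go beyond the odd setting.

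The paper circumvents this by never leaving $\Br(X)$. Rather than pulling back a class from $\Br(J)$, it defines $\calA_{V_\ell}\in\Br(X)$ directly (Definition~\ref{def:BrauerClassOnTheCurve}) as any preimage under $h_X$ of the image of $V_\ell$ in $\HH^1(G_K,\Pic(\Xbar))$; the class on $J$ that you use is what the paper calls $\calB_{V_\ell}$. Lichtenbaum's compatibility of pairings~\eqref{eq:LichtenbaumPairings} together with Lemma~\ref{lem:Tate=frakaX} then gives $\langle D,\calA_{V_\ell}\rangle_3=\mathfrak{a}_{V_\ell}(D)$ for every $D\in\Pic^0(X)$, with no Abel--Jacobi map needed; this yields Proposition~\ref{thm:Main2} and Corollary~\ref{cor:Curves}, from which the theorem follows once one knows that $h_X([\calA_\ell])$ equals the image of $V_\ell$---the input supplied by~\cite{CVCurves}, which replaces the cocycle-matching step you flag as the crux. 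Your route through $\Br(J)$ does appear in the paper, but only as a consequence valid when $X(K)\ne\emptyset$ (the lemma immediately following Definition~\ref{def:BrauerClassOnTheCurve}).
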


	As mentioned above, Lichtenbaum's result shows that the relative Brauer group of a torsor with equal period and index under an elliptic curve $J$ can be obtained by speicialization of a Brauer class on $J$. More generally, \c Ciperiani and Krashen \cite[Theorem 3.5]{CipKrash} have shown this to be true for torsors under any abelian variety. In Section~\ref{sec:BrauerClassesViaSpecialization} we use their result to deduce the following more general version.

	\begin{Theorem}
		\label{thm:SpecializationOfBrauerClasses}
		Let $Y/K$ be a smooth projective and geometrically integral variety over $K$ with Picard variety $A = \Pic^0_Y$. There is a class $\calB_Y \in \Br(A)$ canonically associated to $Y$ (see Definition~\ref{def:B_X}) such that evaluation induces an exact sequence
		\[
			0 \To \frac{A(K)}{\Pic^0(Y)} \stackrel{\calB_Y}{\To} \Br(Y/K) \To \frac{\per(Y)}{\ind(Y)} \To 0,
		\]
	where $\per(Y)$ and $\ind(Y)$ are the period and index subgroups of Definition \ref{def:perind}.
	\end{Theorem}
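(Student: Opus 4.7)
The plan is to derive the exact sequence from the Leray spectral sequence for the étale sheaf $\G_m$ along $\pi\colon Y \to \Spec K$, with a canonical class $\calB_Y \in \Br(A)$ playing the role of Lichtenbaum's class in the torsor case. I would first construct $\calB_Y$ as follows: since $A = \Pic^0_Y$ represents the étale sheaf of degree-zero line bundles on $Y$ modulo isomorphism, there is in general no universal Poincaré line bundle on $Y \times A$, only a canonical $\G_m$-gerbe of such, whose class is defined to be $\calB_Y \in H^2(A, \G_m) = \Br(A)$. The key property is that for each $P \in A(K)$, the pullback $P^*\calB_Y \in \Br(K)$ is precisely the obstruction to the line bundle classified by $P$ on $\Ybar$ descending to $Y$.

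Next, using $\pi_*\G_m = \G_m$ and the fact that $R^1\pi_*\G_m$ is represented by the Picard scheme $\Pic_Y$, the low-degree terms of the Leray spectral sequence give
\[
0 \To \Pic(Y) \To \Pic_Y(K) \To \Br(K) \To \Br(Y),
\]
whose third arrow has image $\Br(Y/K)$. Combining this with the exact sequence $0 \to A \to \Pic_Y \to \NS_Y \to 0$ of $K$-group schemes, and writing $\per(Y) := \im(\Pic_Y(K) \to \NS_Y(K))$ and $\ind(Y) := \im(\Pic(Y) \to \NS_Y(K))$, one obtains the commutative diagram
\[
\begin{array}{ccccccccc}
0 & \To & \Pic^0(Y) & \To & \Pic(Y) & \To & \ind(Y) & \To & 0 \\
& & \downarrow & & \downarrow & & \downarrow & & \\
0 & \To & A(K) & \To & \Pic_Y(K) & \To & \per(Y) & \To & 0
\end{array}
\]
with exact rows and injective vertical arrows. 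The snake lemma then produces
\[
0 \To \frac{A(K)}{\Pic^0(Y)} \To \Br(Y/K) \To \frac{\per(Y)}{\ind(Y)} \To 0,
\]
where the middle group is identified with $\Pic_Y(K)/\Pic(Y) = \Br(Y/K)$ via the Leray sequence.

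The main technical obstacle is to identify the resulting map $A(K)/\Pic^0(Y) \to \Br(Y/K)$ with evaluation of $\calB_Y$. Both maps compute the obstruction for a $K$-point of the Picard scheme to lift to an honest line bundle on $Y$, but proving this matching on the nose requires comparing the Leray differential $d_2^{0,1}\colon \Pic_Y(K) \to \Br(K)$ with the Poincaré gerbe defining $\calB_Y$. The analogous compatibility for torsors under abelian varieties is carried out in \cite[Theorem 3.5]{CipKrash}, and the general case can be reduced to theirs by applying that result to the Albanese torsor $\Alb^1(Y)$---whose Picard variety is canonically $A$---and then transferring the identification back to $Y$ via the Albanese morphism $Y \to \Alb^1(Y)$.
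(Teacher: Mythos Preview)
Your outline is correct and lands on the same two ingredients the paper uses: the snake-lemma computation producing the exact sequence from the basic sequence~\eqref{eq:frakaX} (your Leray sequence), and the reduction to \cite[Theorem~3.5]{CipKrash} via the Albanese torsor $\Alb^1_Y$ to identify the first map with evaluation of $\calB_Y$. The paper packages these steps slightly differently---it first derives~\eqref{eq:perind} in general, then applies \c Ciperiani--Krashen to $V=\Alb^1_Y$ to get $0\to\Pic^0(V)\to A(K)\stackrel{\calB_Y}{\to}\Br^0(V/K)\to 0$, and finally invokes Lemma~\ref{lem:IsoSES}(1) to replace $V$ by $Y$---but the content is the same.

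The one substantive discrepancy is your definition of $\calB_Y$. The paper's Definition~\ref{def:B_X} takes $\calB_Y$ to be the image of the class of $\Alb^1_Y$ under the composite $\HH^1(G_K,B(\Kbar))\to\HH^1(G_K,\Pic(\overline{A}))\simeq\Br(A,0_A)$; this is a Galois-cohomological construction, not a Poincar\'e obstruction. Your $\calB_Y$, by contrast, is the universal Leray obstruction class for the identity point of $\Pic^0_Y$. These are the same element of $\Br(A,0_A)$, but that equality is precisely what \cite[Theorem~3.5]{CipKrash} establishes (in the torsor case), so with your definition the ``main technical obstacle'' you flag largely evaporates: evaluation of your $\calB_Y$ at $P\in A(K)$ is \emph{by construction} the Leray differential $d_2^{0,1}(P)=\mathfrak{a}_Y(P)$. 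What then remains is not matching your $\calB_Y$ to the snake-lemma map, but matching your $\calB_Y$ to the paper's $\calB_Y$---and for that you do need the reduction to $\Alb^1_Y$ and \c Ciperiani--Krashen, exactly as you say. (A minor point: the Poincar\'e gerbe naturally lives on $Y\times A$; the class in $\Br(A)$ you want is the image of $\mathrm{id}_A\in\Pic^0_Y(A)$ under the relative Leray differential for $Y\times A\to A$, which you should state rather than locating the gerbe itself ``in $\Br(A)$''.)
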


	In the context of Theorem~\ref{thm:Hyperelliptic}, we have a hyperelliptic curve $X$ and a torsor $V_\ell$. One may identify $J = \Jac(X) = \Pic^0_X = \Pic^0_{V_\ell}$. When $X$ has a $K$-rational point, there is a embedding $X \hookrightarrow J$ sending the point to the identity element of $A(K)$. As we shall see, the class $\calB_{V_\ell}$ featuring in Theorem~\ref{thm:SpecializationOfBrauerClasses} pulls back via this embedding to give a class in $\Br(X)$ represented by the algebra $\calA_\ell$.
	\subsection{Notation}
		The following notation will be used throughout. 
		
		Let $K$ be a field of characteristic not equal to $2$, with a separable closure $\Kbar$ and absolute Galois group $G_K := \Gal(\Kbar/K)$. For a $G_K$-module $M$ (with the discrete topology) and an integer $i \ge 0$, $\HH^i(G_K,M)$ denotes the $i$th Galois cohomology group. For an \'etale $K$-algebra $L$ and elements $a,b \in L^\times$ define
		\[
			(a,b)_2 := \frac{L[i,j]}{\langle i^2 = a, j^2 = b, ij = -ij \rangle}\,.
		\]
		Kummer theory gives an isomorphism $\kappa:L^\times/L^{\times 2} \simeq \HH^1(G_K,\mu_2(\Lbar))$. The $L$-algebra $(a,b)_2$ represents the class of the cup product $\kappa(a) \cup \kappa(b)$ in $\HH^2(G_K,\mu_2(\Lbar)) = \Br(L)[2]$ (see Definition~\ref{def:BrauerGroup}). We denote this class by $[a,b]_2$.

	Suppose $Y$ is a smooth, projective and geometrically integral variety over $K$. We use $\Ybar$ to denote the base change of $Y$ to $\Kbar$. The function field of $Y$ is denoted $\kk(Y)$. Let $\Pic(Y)$ be the Picard group of $Y$. Then $\Pic(Y) = \Div(Y)/\Princ(Y)$, where $\Div(Y)$ (resp. $\Princ(Y)$) is the group of Weil divisors (resp. principal Weil divisors) of $Y$ defined over $K$. If $D \in \Div(Y)$, then $[D]$ denotes its class in $\Pic(Y)$. Let $\Pic_Y$ denote the (reduced) Picard scheme of $Y$, and let $\Pic^0_Y \subseteq \Pic_Y$ denote the connected component of the identity.  There is a bijective map $\Pic(\Ybar)^{G_K} \to \Pic_Y(K)$, but in general the map $\Pic(Y) \to \Pic(\Ybar)^{G_K}$ will not be surjective. Let $\Pic^0(Y)$ be the subgroup of $\Pic(Y)$ mapping into $\Pic^0_Y(K)$. Then $\NS(Y) := \Pic(Y)/\Pic^0(Y)$ is the {N\'eron-Severi group} of $Y$. If $\lambda \in \NS(\Ybar)^{G_K}$, let $\Pic^\lambda_Y$ denote the corresponding component of the Picard scheme and use $\Pic^\lambda(Y)$ and $\Div^\lambda(Y)$ to denote the subsets of $\Pic(Y)$ and $\Div(Y)$ mapping into $\Pic^\lambda_Y(K)$. We write $\Alb_Y$ for the Albanese scheme of $Y$ parameterizing equivalence classes of zero-cycles on $Y$ and, for $i \in \Z$, write $\Alb^i_Y$ for the degree $i$ component of $\Alb_Y$. Then $\Alb^0_Y$ is an abelian variety, and its dual abelian variety is $\Pic^0_Y$. When $Y$ is a curve, $\NS(\Ybar) = \Z$, $\Pic^i_Y = \Alb^i_Y$ for all $i \in \Z$ and $\Jac(Y) := \Pic^0_Y = \Alb^0_Y$ is called the Jacobian of $Y$.

		A \defi{torsor} under an abelian variety $A$ over $K$ is a variety $V$, together with an algebraic group action of $A$ on $V$ such that $A(\Kbar)$ acts simply transitively on $V(\Kbar)$. For any torsor $V$ under $A$, there is an  $\Kbar$-isomorphism of torsors $\Vbar \simeq \overline{A}$, determined up to translation by an element in $A(\Kbar)$. Since the action of a translation on $\Pic^0(\overline{A})$ is trivial, this determines an isomorphism of $G_K$-modules $\Pic^0(\Vbar) \simeq \Pic^0(\overline{A})$ which is independent of the choice of translation. For any $i \in \Z$, $\Alb^i_Y$ is a torsor under $\Alb^0_Y$. There is a canonical map $Y \to \Alb^1_Y$ induced by sending $y \in Y(\Kbar)$ to the class of the $0$-cycle $y$. This gives a canonical isomorphism of $G_K$-modules, $\Pic^0_Y(\Kbar) \simeq \Pic^0_{\Alb^1_Y}(\Kbar)$.

		 The isomorphism classes of torsors under $A$ are parametrized by the Weil-Ch\^atelet group, $\HH^1(K,A(\Kbar))$. We define the \defi{period} of a torsor to be its order in the Weil-Ch\^atelet group, and define the period of $Y$ to be the period of $\Alb^1_Y$.
 
\section{The relative Brauer group of a variety}
	
	\subsection{The Brauer group of a variety}
		\begin{Definition}\label{def:BrauerGroup}
			Let $Y$ be a smooth, projective and geometrically integral variety over a field $K$. The \defi{algebraic Brauer group} of $Y$ is 
			\[
				\Br(Y) :=\ker\left(\textup{div}_*:\HH^2(G_K,\kk(\Ybar)^\times) \to \HH^2(G_K,\Div(\Ybar)^\times)\right)
			\]
			where $\textup{div}_*$ is induced by the map sending a rational function on $\Ybar$ to its divisor.
		\end{Definition}
	
		When $Y = \Spec(K)$ we abbreviate $\Br(\Spec(K))$ to $\Br(K)$. In this case $\diw_*$ is trivial and so $\Br(K) = \HH^2(G_K,\Kbar^\times)$ is the usual Brauer group of the field $K$. In general, the inflation-restriction sequence identifies $\HH^2(G_K,\kk(\Ybar)^\times)$ with the relative Brauer group, $\Br(\kk(\Ybar)/\kk(Y))$. Thus, $\Br(Y)$ can (and tacitly will) be viewed as the subgroup of $\Br(\kk(Y))$ consisting of classes that are unramified at all divisors and split by extension of scalars to $\Kbar$.  Although we will not need it, let us also mention the \'etale-cohomological definition of the algebraic Brauer group as
		\[
			\Br(Y) := \ker\left( \HH^2_\textup{\'et}(Y,\G_m) \to \HH^2_\textup{\'et}(\Ybar,\G_m)\right)\,.
		\]
		It is shown in \cite[Appendix]{Lichtenbaum69} when $Y$ is a curve, and in \cite[Lemme 14]{C-TS} in general, that these definitions are equivalent.
	
	\subsection{The basic exact sequence}
		The Hochschild-Serre spectral sequence gives rise to a well known exact sequence,
		\begin{equation}\label{eq:frakaX}
			0 \to \Pic(Y) \to \Pic(\Ybar)^{G_K} \stackrel{\mathfrak{a}_Y}\to \Br(K) \to \Br(Y) \stackrel{h_Y}\to \HH^1(G_K,\Pic(\Ybar)) \to \HH^3(G_K,\Kbar^\times)\,.
		\end{equation}
		Following \cite[Section $2$]{Lichtenbaum69} (in the case that $Y$ is a curve), we will give a more direct derivation of~\eqref{eq:frakaX} using Galois cohomology. The exact sequences, 
		\begin{equation}
			\label{eq:PicSequence}
			1 \to \kk(\Ybar)^\times/\Kbar^\times \stackrel{\divv}\to \Div(\Ybar) \to \Pic(\Ybar) \to 0\,,
		\end{equation}
		and
		\begin{equation*}
			1 \to \Kbar^\times \to \kk(\Ybar)^\times \to \kk(\Ybar)^\times/\Kbar^\times \to 1\,,
		\end{equation*}
		induce a commutative diagram with exact rows and columns,
		\begin{equation}\label{eq:bigdiagram}
			\xymatrix{
				&&\Br(Y) \ar@{^{(}->}[d]
				&\HH^1(G_K,\Pic(\Ybar))\ar[d]^\rho\\
				\HH^1\left(G_K,\frac{\kk(\Ybar)^\times}{\Kbar^\times}\right)\ar[r]^\delta
				&\Br(K) \ar[r]
				&\HH^2(G_K,\kk(\Ybar)^\times) \ar[r]^\epsilon\ar[d]^{\divv_*}
				&\HH^1\left(G_K,\frac{\kk(\Ybar)^\times}{\Kbar^\times}\right)
				\ar[d]\ar[r]
				&\HH^3(G_K,\Kbar^\times)\\
				&&\HH^2(G_K,\Div(\Ybar))\ar@{=}[r]
				&\HH^2(G_K,\Div(\Ybar)).
			}
		\end{equation}
		The image of $\Br(K)$ is contained in the kernel of $\divv_*$ (a consequence of the fact that the divisor of a constant function is $0$). Also, $\rho$ is injective, since $\HH^1(G_K,\Div(\Ybar))=0$ (a consequence of Shapiro's Lemma). From the diagram we see that $\epsilon$ induces a unique map $h_Y$ making~\eqref{eq:frakaX} exact at all terms to the right of $\Br(K)$. We define $\mathfrak{a}_Y$ to be the composition of $\delta$ and the connecting homomorphism arising from~\eqref{eq:PicSequence}. The connecting homomorphism is surjective (for the same reason that $\rho$ is injective) and $\delta$ is injective (by Hilbert's Theorem 90). This shows that~\eqref{eq:frakaX} is exact at $\Br(K)$. The map $\Pic(Y) \to \Pic(\Ybar)^{G_K}$ is injective by Hilbert's Theorem 90 and its image is equal to  $\textup{image}(\Div(\Ybar)^{G_K} \to \Pic(\Ybar)^{G_K})$, which is equal to $\ker(\mathfrak{a}_X)$ by definition. Therefore~\eqref{eq:frakaX} is exact. 
		
		\begin{Lemma}\label{lem:frakaX2}
			Suppose $[D] \in \Pic(\Ybar)^{G_K}$ is represented by a \Edit{Removed ``ample'' as suggested by the referee}base point free divisor $D \in \Div(\Ybar)$. Then the complete linear system of effective divisors linearly equivalent to $D$ forms a Brauer-Severi variety $S$ over $K$, and $\mathfrak{a}_Y([D])$ is the class of $S$ in $\Br(K)$.
		\end{Lemma}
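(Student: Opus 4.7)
The plan is to unwind the definition of $\mathfrak{a}_Y([D])$ as an explicit $2$-cocycle and then match it with the standard description of the Brauer class of the Brauer-Severi variety $S$ via the connecting map $\HH^1(G_K,\PGL_{n+1}(\Kbar))\to\HH^2(G_K,\Kbar^\times)$ coming from $1\to\G_m\to\GL_{n+1}\to\PGL_{n+1}\to 1$.

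First, since $[D]\in\Pic(\Ybar)^{G_K}$, for each $\sigma\in G_K$ I will choose $f_\sigma\in\kk(\Ybar)^\times$ satisfying $\divv(f_\sigma)=\sigma D-D$; this $f_\sigma$ is well-defined up to $\Kbar^\times$. Chasing the definitions of the connecting maps associated with~\eqref{eq:PicSequence} and with $1\to\Kbar^\times\to\kk(\Ybar)^\times\to\kk(\Ybar)^\times/\Kbar^\times\to 1$ shows that $\mathfrak{a}_Y([D])\in\Br(K)$ is represented by the $2$-cocycle
\[
c_{\sigma,\tau} := f_\sigma\cdot\sigma(f_\tau)\cdot f_{\sigma\tau}^{-1},
\]
which automatically takes values in $\Kbar^\times$ because its divisor vanishes.

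Next I will identify the complete linear system $|D|$ with $\PP(\HH^0(\Ybar,\calO(D)))$ by sending a nonzero section $g$ to the effective divisor $D+\divv(g)$. Under this identification the natural Galois action on effective divisors linearly equivalent to $D$ becomes the twisted action $\sigma\star g:=f_\sigma\cdot\sigma(g)$ on $\HH^0(\Ybar,\calO(D))$, because $\sigma(D+\divv(g))=\sigma D+\divv(\sigma g)=D+\divv(f_\sigma\sigma g)$. While $\sigma\mapsto(g\mapsto\sigma\star g)$ is only a projective semilinear action (the cocycle relation fails on the level of vector spaces), it descends to a genuine semilinear action on the projectivization and thereby endows $\PP(\HH^0(\Ybar,\calO(D)))$ with a $K$-form, which is the asserted Brauer-Severi variety $S$. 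The base-point-free hypothesis is used only to make the geometric description of $S$ as effective divisors meaningful.

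Finally, to compute $[S]\in\Br(K)$ I will fix a $\Kbar$-basis $s_0,\dots,s_n$ of $\HH^0(\Ybar,\calO(D))$ and define $A_\sigma\in\GL_{n+1}(\Kbar)$ by $f_\sigma\cdot\sigma(s_i)=\sum_j(A_\sigma)_{ji}s_j$. Using $f_\sigma\sigma(f_\tau)=c_{\sigma,\tau}f_{\sigma\tau}$, expanding $\sigma\star(\tau\star s_i)$ in two different ways will yield
\[
A_\sigma\cdot\sigma(A_\tau)=c_{\sigma,\tau}\cdot A_{\sigma\tau}.
\]
Hence the image $\bar A_\sigma\in\PGL_{n+1}(\Kbar)$ is a genuine $1$-cocycle — precisely the cocycle defining $S$ — and its image under the connecting homomorphism is represented by $c_{\sigma,\tau}=\mathfrak{a}_Y([D])$, so $[S]=\mathfrak{a}_Y([D])$ as claimed. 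The main thing to be careful about is bookkeeping around sign and order conventions in the two connecting maps, so that the cocycle identities line up without an extra inverse or transpose; once those conventions are fixed the computation is mechanical.
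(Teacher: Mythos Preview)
Your proposal is correct and follows essentially the same route as the paper: both compute $\mathfrak{a}_Y([D])$ as the explicit $2$-cocycle $f_\sigma\,\sigma(f_\tau)\,f_{\sigma\tau}^{-1}$, define the twisted semilinear action $g\mapsto f_\sigma\,\sigma(g)$ on $\calL(D)=\HH^0(\Ybar,\calO(D))$, descend the induced projective action to obtain $S$, and identify the resulting $\PGL$-cocycle's image in $\Br(K)$ with the same $2$-cocycle. The only cosmetic difference is that the paper works with the abstract semilinear maps $\phi_\sigma$ and invokes Weil's descent criterion, while you fix a basis and phrase everything in terms of matrices $A_\sigma$.
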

		
		\begin{proof}
			Since $[D]$ is fixed by Galois, for each $\sigma \in G_K$ there exists $f_\sigma \in \kk(\Ybar)^\times$ such that $\diw(f_\sigma) = \sigma(D) - (D)$. For any $\sigma,\tau \in G_K$, the divisor of $a_{\sigma,\tau} := \sigma(f_\tau) f_\sigma/f_{\sigma\tau}$ is $0$, which implies $a_{\sigma,\tau} \in \Kbar^\times$. The definition of $\mathfrak{a}_Y$ shows that $\mathfrak{a}_Y([D])$ is represented by the $2$-cocycle, $(\sigma,\tau) \mapsto a_{\sigma,\tau}$.
			
			The set $\calL(D) = \left\{ f \in \kk(\Ybar)^\times \;:\; \diw(f) + D \ge 0 \right\} \cup \{0\}$ is a $\Kbar$-vector space. For each $\sigma \in G_K$, the map $\phi_\sigma: f \mapsto \sigma(f)f_\sigma$ is an automorphism of $\calL(D)$. For $\sigma, \tau \in G_K$, these maps satisfy $a_{\sigma,\tau}\phi_{\sigma\tau} = \phi_\sigma \circ \phi_\tau$, and using this one easily verifies that the maps $\tilde{\phi}_\sigma:\PP(\calL(D)) \to \PP(\calL(D))$ induced by the $\phi_\sigma$ satisfy Weil's criteria for Galois descent (see \cite[Theorem 1]{Weil}). We conclude that there is a $K$-variety $S$ and a $\Kbar$-isomorphism $\psi:S_\Kbar \simeq \PP(\calL(D))$ such that $\tilde{\phi}_\sigma = \psi^\sigma\circ\psi^{-1}$, for every $\sigma \in G_K$. In particular, $S$ is a Brauer-Severi variety, and its class in $\HH^1(G_K,\Aut(\PP(\calL(D))) \simeq \HH^1(G_K,\PGL(\calL(D)))$ is represented by the $1$-cocycle $\sigma \mapsto \tilde{\phi}_\sigma$. The image of $S$ in $\Br(K) = \HH^2(G_K,\Kbar^\times)$ is then given by the $2$-cocycle $(\sigma,\tau) \mapsto \sigma(\phi_\tau)\circ\phi_\sigma\circ\phi_{\sigma\tau}^{-1} = a_{\sigma,\tau}$. As shown above, this also represents $\mathfrak{a}_Y([D])$.
		\end{proof}

	\subsection{Specialization of Brauer classes}
		There is a natural {\em evaluation pairing},
		\[
			\textup{ev} : Y(K) \times \Br(Y) \To \Br(K)\,.
		\]
		For $P \in Y(K)$, define $\Br(Y,P) := \ker\left(\textup{ev}(P,\,):\Br(Y) \to \Br(K)\right)$.
	
		\begin{Lemma}\label{lem:EvalSplits}
			For any $K$-rational point $P \in Y(K)$ the map $h_Y$ of~\eqref{eq:frakaX} restricts to an isomorphism $\Br(Y,P) \simeq \HH^1(G_K,\Pic(\Ybar))$.
		\end{Lemma}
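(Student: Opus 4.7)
The plan is to exploit the section $P\colon \Spec K \to Y$ of the structure map $\pi\colon Y \to \Spec K$. Since $\pi \circ P$ is the identity on $\Spec K$, functoriality makes $P^* = \textup{ev}(P,\,)$ a retraction of the inflation $\pi^*\colon \Br(K) \to \Br(Y)$, yielding an internal direct sum decomposition $\Br(Y) = \pi^*\Br(K) \oplus \Br(Y,P)$. Combined with the identification $\ker(h_Y) = \pi^*\Br(K)$ coming from exactness of \eqref{eq:frakaX}, this immediately gives injectivity of $h_Y|_{\Br(Y,P)}$.

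Surjectivity onto $\HH^1(G_K,\Pic(\Ybar))$ reduces to surjectivity of $h_Y$ itself, since any lift $\alpha' \in \Br(Y)$ of a class $\gamma$ can be replaced by $\alpha := \alpha' - \pi^*P^*\alpha' \in \Br(Y,P)$, which satisfies $h_Y(\alpha) = h_Y(\alpha') = \gamma$ because the correction lies in $\ker(h_Y)$. By exactness of \eqref{eq:frakaX}, $h_Y$ is surjective exactly when the final map $\HH^1(G_K,\Pic(\Ybar)) \to \HH^3(G_K,\Kbar^\times)$ vanishes. I would establish this vanishing via the Hochschild-Serre spectral sequence $E_2^{p,q}=\HH^p(G_K,\HH^q_{\textup{\'et}}(\Ybar,\G_m))\Rightarrow \HH^{p+q}_{\textup{\'et}}(Y,\G_m)$, in which the map in question is the differential $d_2\colon E_2^{1,1}\to E_2^{3,0}$. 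The rational point $P$ makes each edge map $\HH^n(G_K,\Kbar^\times) \to \HH^n_{\textup{\'et}}(Y,\G_m)$ split-injective via $P^*$, forcing $E_2^{n,0}=E_\infty^{n,0}$ for all $n$; hence every incoming differential to $E_r^{n,0}$ vanishes, including the $d_2$ of interest.

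The step I expect to be most delicate is matching the map from diagram~\eqref{eq:bigdiagram} (constructed as $\rho$ followed by the connecting homomorphism of $1\to\Kbar^\times\to\kk(\Ybar)^\times\to\kk(\Ybar)^\times/\Kbar^\times\to 1$) with the spectral sequence differential $d_2$. A less machinery-heavy alternative, working entirely within~\eqref{eq:bigdiagram}, is as follows: given a representing 1-cocycle $\sigma \mapsto D_\sigma$ for $\gamma$, move the representing divisors $\tilde D_\sigma \in \Div(\Ybar)$ into general position away from $P$, lift the induced 2-cocycle $g_{\sigma,\tau}$ from $\kk(\Ybar)^\times/\Kbar^\times$ to $\tilde g_{\sigma,\tau} \in \kk(\Ybar)^\times$, and observe that the constants $\tilde g_{\sigma,\tau}(P) \in \Kbar^\times$ obtained by evaluating at $P$ exhibit the a priori coboundary in $\Kbar^\times$ as trivial in $\HH^3(G_K,\Kbar^\times)$.
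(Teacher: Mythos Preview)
Your approach is essentially the paper's: its entire proof is the single sentence ``The evaluation map gives a retraction of the exact sequence $\Br(K) \to \Br(Y) \stackrel{h_Y}\to \HH^1(G_K,\Pic(\Ybar))$.'' You have correctly unpacked this, and in fact gone further by explicitly justifying the surjectivity of $h_Y$ (via the vanishing of $d_2\colon E_2^{1,1}\to E_2^{3,0}$ forced by the split edge maps), a point the paper leaves to the reader.
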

		
		\begin{proof}
			The evaluation map gives a retraction of the exact sequence
			\[
				\Br(K) \to \Br(Y) \stackrel{h_Y}\to \HH^1(G_K,\Pic(\Ybar))\,.
			\]
		\end{proof}

	\subsection{Period and index subgroups}
		Following \cite{CipKrash} we make the following definitions
		\begin{Definition}\label{def:perind}
			Let $Y$ and $\mathfrak{a}_Y$ be as in~\eqref{eq:frakaX}.
			\begin{itemize}
				\item[(i)] $\Br^0(Y/K)$ is the image of $\mathfrak{a}_Y$ restricted to $\Pic^0(\Ybar)^{G_K}$.
				\item[(ii)] $\edit{\per}(Y)$ is the image of $\Pic(\Ybar)^{G_K}$ in $\NS(\Ybar)$.
				\item[(iii)] $\edit{\ind}(Y)$ is the image of $\Pic(Y)$ in $\NS(\Ybar)$.
			\end{itemize}	
		\end{Definition}

		\begin{Remark}\label{rem:PI}
			If $Y$ is a curve of period $p$ and index $i$, then $\NS(\Ybar) \simeq \Z$ and under this isomorphism $\per(Y) = p\Z$ and $\ind(Y) = i\Z$. The integer $p$ (resp. $i$) is the least positive degree of a $K$-rational divisor class (resp. $K$-rational divisor) on $Y$.
		\end{Remark}
	
		By definition of $\Br^0(Y/K)$ we have a commutative diagram with exact rows
		\begin{equation}
			\xymatrix{
				0 \ar[r]& \Pic^0(Y) \ar[r]\ar[d]& \Pic^0(\Ybar)^{G_K} \ar[r]^{\mathfrak{a}_Y}\ar[d]& \Br^0(Y/K) \ar[r]\ar[d]& 0\\
				0 \ar[r]& \Pic(Y) \ar[r]& \Pic(\Ybar)^{G_K} \ar[r]^{\mathfrak{a}_Y}& \Br(Y/K) \ar[r] & 0
			}	
		\end{equation}
		Applying the snake lemma immediately yields an exact sequence
		\begin{equation}\label{eq:perind}
			0 \to \Br^0(Y/K) \To \Br(Y/K) \To \frac{\per(Y)}{\ind(Y)} \to 0\,.
		\end{equation}

		If $\phi:Y \to Z$ is a morphism of smooth projective and geometrically integral varieties over $K$. Then the identity map on $\Br(K)$ induces an injection $\Br(Z/K) \hookrightarrow \Br(Y/K)$ and $\phi$ induces a morphism of short exact sequences,
		\begin{equation}\label{eq:SESmorphism}
			\xymatrix{
				0 \ar[r]& \Pic^0(Z) \ar[r]\ar[d]^{\phi^*}& \Pic^0(\overline{Z})^{G_K} \ar[r]^{\mathfrak{a}_Z}\ar[d]^{\phi^*}& \Br^0(Z/K) \ar[r]\ar@{^{(}->}[d]& 0\\
				0 \ar[r]& \Pic^0(Y) \ar[r]& \Pic^0(\Ybar)^{G_K} \ar[r]^{\mathfrak{a}_Y}& \Br^0(Y/K) \ar[r] & 0.\\
			}
		\end{equation}
		This has the following consequences.
		\begin{Lemma}\label{lem:IsoSES}
			Let $\phi:Y \to Z$ be as above.
			\begin{enumerate}
				\item If $\phi$ is the canonical map $Y \to \Alb^1_Y$, then~\eqref{eq:SESmorphism} is an isomorphism of exact sequences.
				\item If $D \in \Div(\overline{Z})$ represents a class in $\Pic^0(\overline{Z})^{G_K}$ and $\phi^*D$ is principal, then $D$ is linearly equivalent to a $K$-rational divisor.
			\end{enumerate}
		\end{Lemma}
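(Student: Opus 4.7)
The plan is to obtain both statements by diagram-chasing in the commutative diagram~\eqref{eq:SESmorphism}, using facts already assembled in the setup.

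For part (1), the key step is to check that the middle vertical arrow $\phi^*\colon \Pic^0(\overline{Z})^{G_K}\to\Pic^0(\Ybar)^{G_K}$ is an isomorphism when $Z=\Alb^1_Y$. This follows immediately from the canonical $G_K$-module isomorphism $\Pic^0_Y(\Kbar)\simeq\Pic^0_{\Alb^1_Y}(\Kbar)$ recalled in the notation section, combined with the bijections $\Pic(\overline{W})^{G_K}\simeq \Pic_W(K)$ (for $W\in\{Y,Z\}$) restricted to the $\Pic^0$-components. Next I would observe that the inclusion $\Br(Z/K)\hookrightarrow\Br(Y/K)$ noted just before the lemma restricts to an injection on $\Br^0$, so that the right vertical arrow of~\eqref{eq:SESmorphism} is injective. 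A snake-lemma argument applied to~\eqref{eq:SESmorphism}, with its middle vertical an isomorphism and its right vertical injective, then forces the left and right vertical arrows to be isomorphisms as well.

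For part (2), I would chase the class $[D]\in \Pic^0(\overline{Z})^{G_K}$ through~\eqref{eq:SESmorphism} as follows. The hypothesis that $\phi^*D$ is principal is exactly the assertion that $\phi^*[D] = 0$ in $\Pic^0(\Ybar)^{G_K}$, so commutativity of the middle square of~\eqref{eq:SESmorphism} gives $\phi^*(\mathfrak{a}_Z([D])) = \mathfrak{a}_Y(0) = 0$ in $\Br^0(Y/K)$. Since the right vertical arrow is the restriction of the injection $\Br(Z/K)\hookrightarrow \Br(Y/K)$ (valid for any $\phi$, not just the canonical one), this forces $\mathfrak{a}_Z([D]) = 0$. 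Exactness of the top row of~\eqref{eq:SESmorphism} then places $[D]$ in the image of $\Pic^0(Z)\to \Pic^0(\overline{Z})^{G_K}$, which, via the Hilbert~90 injection on the bottom left, is precisely the statement that $D$ is linearly equivalent to a $K$-rational divisor.

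I do not expect any serious obstacle here: both parts are bookkeeping in~\eqref{eq:SESmorphism}, with the substantive input being the canonical Picard identification between $Y$ and $\Alb^1_Y$ used in part~(1). The only care needed is to note the identification of ``$D$ is linearly equivalent to a $K$-rational divisor'' with ``$[D]$ lies in the image of $\Pic^0(Z)\to \Pic^0(\overline{Z})^{G_K}$'', which is the defining property of that image via Hilbert~90.
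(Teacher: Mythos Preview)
Your proposal is correct and follows essentially the same approach as the paper: both parts are obtained by applying the snake lemma (or an equivalent diagram chase) to~\eqref{eq:SESmorphism}, with the key input for part~(1) being the canonical isomorphism $\phi^*\colon \Pic^0(\overline{\Alb^1_Y})^{G_K}\to\Pic^0(\Ybar)^{G_K}$. Your write-up is in fact more detailed than the paper's one-line proof, spelling out explicitly how injectivity of the right vertical map combines with the middle isomorphism to force all three verticals to be isomorphisms in~(1), and how the same injectivity drives the chase in~(2).
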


		\begin{proof}
			Both statements follow by applying the snake lemma to~\eqref{eq:SESmorphism}. For the first statement, one also uses the fact that $\phi^*:\Pic^0(\Alb^1_\Ybar)^{G_K} \to \Pic^0(\Ybar)^{G_K}$ is an isomorphism.
		\end{proof}

\section{Relative Brauer groups via specialization}

	\subsection{Brauer classes on the Picard variety}%
		\label{sec:BrauerClassesViaSpecialization}
		Suppose $A$ is an abelian variety with dual $B = \Pic^0_A$. There is a canonical choice of rational point on $A$, namely the identity $0_A$. So by Lemma~\ref{lem:EvalSplits}, $h_A$ restricts to an isomorphism $\Br(A,0_A) \simeq \HH^1(G_K,\Pic(\overline{A}))$. Therefore, the inclusion $\Pic^0(\overline{A}) \subset \Pic(\overline{A})$ induces a map
		\begin{equation}\label{eq:WCtoBr}
			\HH^1(G_K,B(\Kbar)) = \HH^1(G_K,\Pic^0(\overline{A})) \to \HH^1(G_K,\Pic(\overline{A})) \simeq \Br(A,0_A) 
		\end{equation}
		
		\begin{Definition}
			\label{def:B_X}
			For a torsor $V$ under an abelian variety $B$ with dual abelian variety $A$, let $\calB_V \in \Br(A,0_A)$ be the image of the class of $V$ under the map in~\eqref{eq:WCtoBr}. For a smooth projective and geometrically integral variety $Y$ with Picard variety $A = \Pic^0_Y$, define $\calB_Y = \calB_{\Alb^1_Y} \in \Br(A,0_A)$ where $\Alb^1_Y$ is the torsor under $B = \Alb^0_Y$ parameterizing classes of $0$-cycles of degree $1$ on $Y$.
		\end{Definition}
	
		\begin{Remark}
		When $Y$ is a torsor under $B$ these two definitions of $\calB_Y$ coincide, since in this case $B = \Alb^0_Y$ and $Y = \Alb^1_Y$. 
		\end{Remark}
	
		\begin{proof}[Proof of Theorem~\ref{thm:SpecializationOfBrauerClasses}]
			Let $Y$ be a smooth projective and geometrically integral variety, let $\calB_Y \in \Br(A,0_A)$ as in Definition~\ref{def:B_X} and let $V = \Alb^1_Y$. Evaluation of $\calB_Y$ gives a homomorphism $A(K) \to \Br(K)$, which by \cite[Theorem 3.5]{CipKrash} coincides with $\mathfrak{a}_V:\Pic^0(\Vbar)^{G_K} = A(K) \to \Br(K)$. Hence evaluation of $\calB_Y$ induces a short exact sequence,
			\begin{equation}
				\label{eq:ForAlb1}
				0 \to \Pic^0(V) \To A(K) \stackrel{\calB_Y}\To \Br^0(V/K) \to 0.
			\end{equation}
			Using Lemma~\ref{lem:IsoSES} we may replace $V$ by $Y$ in~\eqref{eq:ForAlb1} without affecting the exactness. The theorem then follows from the exactness of~\eqref{eq:perind}.
		\end{proof}

	\subsection{Brauer classes on a curve}

		\begin{Definition}\label{def:BrauerClassOnTheCurve}
			Let $X$ be a smooth projective and geometrically integral curve. A torsor $V$ under the Jacobian of $X$ is \defi{Brauer-like} if its image in $\HH^1(G_K,\Pic(\Xbar))$ lies in the image of the map $h_X$ in~\eqref{eq:frakaX}. For a Brauer-like torsor $V$, let $\calA_V \in \Br(X)$ be any class such that $h_X(\calA_V)$ is the image of $V$ in $\HH^1(G_K,\Pic(\Xbar))$.
		\end{Definition}
	
		We note that $\calA_V$ is only defined up to the image of $\Br(K)$ in $\Br(X)$. The relationship between $\calA_V$ and the class $\calB_V$ of Definition~\ref{def:B_X} is given by the following.
	
		\begin{Lemma}
			Suppose $X$ has a rational point $P \in X(K)$ and let $V$ be a torsor under the Jacobian $J$ of $X$. Then
			\begin{enumerate}
				\item $V$ is Brauer-like,
				\item there is a unique choice for $\calA_V$ which lies in $\Br(X,P)$ and 
				\item $\calA_V = i^*(\calB_V)$, where $i^*:X \to J$ is the map induced by sending $Q \in X(\Kbar)$ to $[Q-P]$.
			\end{enumerate}
		\end{Lemma}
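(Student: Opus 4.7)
The plan is to deduce parts (1) and (2) directly from Lemma~\ref{lem:EvalSplits}. The rational point $P$ upgrades $h_X$ to an isomorphism $h_X|_{\Br(X,P)}: \Br(X,P) \simeq \HH^1(G_K,\Pic(\Xbar))$, so in particular $h_X$ is surjective onto $\HH^1(G_K,\Pic(\Xbar))$. Since the image of the class of $V$ lies in $\HH^1(G_K,\Pic^0(\Xbar)) \subset \HH^1(G_K,\Pic(\Xbar))$, it is in the image of $h_X$, which proves (1). The unique preimage inside $\Br(X,P)$ provided by the isomorphism is then the distinguished representative for $\calA_V$, giving (2).

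For part (3), I would first verify that $i^*(\calB_V)$ lies in $\Br(X,P)$. Since $i(P) = [P-P] = 0_J$, functoriality of the evaluation pairing gives $\textup{ev}_P(i^*(\calB_V)) = \textup{ev}_{0_J}(\calB_V) = 0$, where we implicitly identify $J$ with its dual $J^\vee$ via the canonical principal polarization in order to regard $\calB_V$ as a class on $J$ as required by the statement. Combined with (2), the injectivity of $h_X$ restricted to $\Br(X,P)$ reduces (3) to proving the identity $h_X(\calA_V) = h_X(i^*(\calB_V))$ in $\HH^1(G_K,\Pic(\Xbar))$.

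To verify this identity, I would exploit functoriality of the sequence~\eqref{eq:frakaX} in the morphism $i: X \to J$: the resulting commutative square yields $h_X(i^*(\calB_V)) = i^*(h_J(\calB_V))$, where $i^*$ on the right is pullback on Picard groups. By Definition~\ref{def:B_X}, $h_J(\calB_V)$ is the image of the class $[V] \in \HH^1(G_K,J(\Kbar))$ under the composition $\HH^1(G_K,J(\Kbar)) \simeq \HH^1(G_K,\Pic^0(\overline{J})) \injects \HH^1(G_K,\Pic(\overline{J}))$, with the first isomorphism induced by the principal polarization $\lambda: J \simeq \Pic^0_J$. The key classical input is the autoduality identity $i^* \circ \lambda = \mathrm{id}_J$ viewed as a map $J \to \Pic^0_J \to \Pic^0_X = J$; granting this, tracing $[V]$ through the composition shows that $i^*(h_J(\calB_V))$ is precisely $[V]$ regarded inside $\HH^1(G_K,\Pic^0(\Xbar)) \subset \HH^1(G_K,\Pic(\Xbar))$, which is $h_X(\calA_V)$ by definition of $\calA_V$.

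The main obstacle will be making the autoduality computation precise and verifying that the implicit $J \simeq J^\vee$ identification used to view $\calB_V \in \Br(J,0_J)$ is consistent with the sign conventions embedded in Definition~\ref{def:B_X}; everything else amounts to standard functoriality and diagram chase.
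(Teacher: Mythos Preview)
Your proposal is correct and follows essentially the same route as the paper: parts (1) and (2) via Lemma~\ref{lem:EvalSplits}, and part (3) by functoriality of $h$ under $i^*$ combined with the autoduality statement that $i^*:\Pic^0(\overline{J})\to\Pic^0(\Xbar)$ is an isomorphism. The paper packages the last step into a single commutative square and also records the auxiliary fact $\HH^1(G_K,\NS(\Xbar))=\HH^1(G_K,\Z)=0$, which lets one identify $\HH^1(G_K,\Pic(\Xbar))$ with $\HH^1(G_K,\Pic^0(\Xbar))$; your explicit check that $i^*(\calB_V)\in\Br(X,P)$ and your caution about the sign in $i^*\circ\lambda$ are reasonable additions but not substantively different.
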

	
		\begin{proof}
			By Lemma~\ref{lem:EvalSplits} $h_X$ gives an isomorphism $\Br(X,P) \simeq \HH^1(G_K,\Pic(\Xbar))$. So it is clear that $V$ is Brauer-like and that the choice for $\calA_V$ in $\Br(X,P)$ is unique. The cokernel of the map $\HH^1(G_K,\Pic^0(\Xbar)) \to \HH^1(G_K,\Pic(\Xbar))$ induced by the inclusion is $\HH^1(G_K,\NS(\Xbar)) = \HH^1(G_K,\Z) = 0$. So the map $i^*$ induces a commutative diagram
			\[
				\xymatrix{
					\Br(X,P) \ar[rr]^{h_X}
					&&\HH^1\left(G_K,\Pic(\Xbar)\right) \ar@{=}[r] 
					&\HH^1\left(G_K,\Pic^0(\Xbar)\right) \\
					\Br(J,0_J) \ar[rr]^{h_J} \ar[u]^{\iota*} 
					&&\HH^1\left(G_K,\Pic(\overline{J})\right) 
					&\HH^1\left(G_K,\Pic^0(\overline{J})\right) \ar[l] \ar[u]^{\iota^*} 
				}
			\]
			Both $h_X$ and $h_J$ are isomorphisms (again by Lemma~\ref{lem:EvalSplits}). The vertical map on the right of the diagram is also an isomorphism, since $i^*:\Pic^0(\overline{J}) \to \Pic^0(\Xbar)$ is an isomorphism. The lemma then follows from a diagram chase.
		\end{proof}

	\subsection{Lichtenbaum's pairings}
	For a proper, smooth, geometrically integral curve $X$ over $K$, Lichtenbaum defined in \cite{Lichtenbaum69} three pairings and showed that they are compatible in the sense that there is a commutative diagram:
	\begin{equation}\label{eq:LichtenbaumPairings}
	\begin{array}[c]{cccccc}
		\langle\,,\,\rangle_1: &\Pic^0(\Xbar)^{G_K} & \times & \HH^1(G_K,\Pic^0(\Xbar)) & \rightarrow & \Br(K) \\
		&\rotatebox{90}{$\hookrightarrow$} && \rotatebox{90}{$\leftarrow$}&& \rotatebox{90}{$=$}\\
		\langle\,,\,\rangle_2: &\Pic^0(X) & \times & \HH^1(G_K,\Pic(\Xbar)) & \rightarrow & \Br(K)\\
		&\rotatebox{90}{$\hookleftarrow$} && \rotatebox{90}{$\rightarrow$}&& \rotatebox{90}{$=$}\\
		\langle\,,\,\rangle_3: &\Pic(X) & \times & \Br(X) & \rightarrow & \Br(K)
	\end{array}
	\end{equation}

	The pairing $\langle\,,\,\rangle_1$ was first defined by Tate; its definition may be found in \cite[Section 3]{CipKrash}. The pairing $\langle\,,\,\rangle_2$ is induced by $\langle\,,\,\rangle_1$; that it is well defined is verified in \cite[Section 4]{Lichtenbaum69}. The third pairing is induced by the evaluation map. Specifically, for $D \in \Pic(X)$ and $\calA \in \Br(X)$ one has $\langle D,\calA\rangle_3 = \bigotimes_P \calA(P)^{\otimes n_P}$, where $\sum_{P}n_PP$ is an integral linear sum of closed points of $X$ representing the divisor class $D$ (see \cite[Section 4]{Lichtenbaum69}). The vertical maps of the left column of~\eqref{eq:LichtenbaumPairings} are the obvious ones. The upper-middle map is induced by the inclusion $\Pic^0(\Xbar) \to \Pic(\Xbar)$, and the lower-middle map is $h_X$ of~\eqref{eq:frakaX}.

	The following \edit{lemma}\Edit{replaced "result" with "lemma" for clarity} relates the map $\mathfrak{a}_V$ with Tate's pairing. It was first proven by Lichtenbaum in the case of elliptic curves; for general abelian varieties see \cite[Theorem 3.5]{CipKrash}.
	\begin{Lemma}\label{lem:Tate=frakaX}
		For $V \in \HH^1(G_K,\Pic^0(\Xbar))$ and $P \in \Pic^0(\overline{V})^{G_K} = \Pic^0(\Xbar)^{G_K}$ one has $\langle P,V \rangle_1 = \mathfrak{a}_V(P)$.
	\end{Lemma}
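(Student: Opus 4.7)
The plan is to unwind both pairings as explicit $2$-cocycles in $\Kbar^\times$ and verify that they represent the same element of $\Br(K) = \HH^2(G_K,\Kbar^\times)$.

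For the right-hand side, I would use the construction in the proof of Lemma~\ref{lem:frakaX2}. Since $P \in \Pic^0(\Vbar)^{G_K}$ via the canonical identification $\Pic^0(\Vbar) = \Pic^0(\Xbar)$, I choose a representative divisor $D$ on $\Vbar$ of the class $P$ and functions $f_\sigma \in \kk(\Vbar)^\times$ with $\diw(f_\sigma) = \sigma(D) - D$. The $2$-cocycle
\[
	a_{\sigma,\tau} \;=\; \frac{\sigma(f_\tau)\,f_\sigma}{f_{\sigma\tau}} \;\in\; \Kbar^\times
\]
then represents $\mathfrak{a}_V(P)$.

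For the left-hand side I would spell out Tate's definition of $\langle\,,\,\rangle_1$ as in \cite[Section~3]{CipKrash}. Fixing a $\Kbar$-point $v_0 \in V(\Kbar)$ yields an isomorphism of torsors $\iota_{v_0}\colon \Vbar \to \overline{J}$, $v \mapsto v-v_0$, and the class $V$ is represented by the cocycle $V_\sigma = \sigma(v_0)-v_0 \in J(\Kbar) = \Pic^0(\Xbar)$. Lift each $V_\sigma$ to a degree-zero divisor $d_\sigma$ on $\Xbar$; the combination $\sigma(d_\tau) + d_\sigma - d_{\sigma\tau}$ is principal, of the form $\diw(g_{\sigma,\tau})$. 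Choosing a degree-zero divisor $D_0$ on $\Xbar$ representing $P$ whose support is disjoint from that of the $d_\sigma$, Tate's pairing is then represented by the $2$-cocycle $g_{\sigma,\tau}(D_0) \in \Kbar^\times$.

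The comparison step is then to match these two cocycles. The natural move is to take $D = \iota_{v_0}^*(D_0)$ (transferred from $\overline{J}$ to $\Vbar$ via any Abel--Jacobi identification); after this choice one can relate $f_\sigma$ to $g_{\sigma,\tau}$ evaluated at $D_0$ by combining the translation action on $\Pic^0$ with Weil reciprocity. A direct manipulation then shows that the two $2$-cocycles differ by a coboundary coming from the base-point choice $v_0$.

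The main obstacle is Step 3: tracking the noncanonical identification $\iota_{v_0}$ through the formulas and verifying that the scalar correction terms assemble into a coboundary; this ultimately relies on the fact that translations act trivially on $\Pic^0$ together with the theorem of the square. This computation was carried out explicitly by Lichtenbaum \cite[Sections~3--4]{Lichtenbaum69} for elliptic curves and in the generality we require (abelian varieties, applied here to $A = J$) by \c Ciperiani and Krashen in the proof of \cite[Theorem~3.5]{CipKrash}, to which we refer for the detailed cocycle manipulation.
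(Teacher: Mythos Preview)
The paper does not give its own proof of this lemma: it simply records it as a known result, attributing the elliptic-curve case to Lichtenbaum and the general case to \c Ciperiani--Krashen \cite[Theorem~3.5]{CipKrash}. Your proposal sketches the explicit cocycle comparison that underlies those references and then defers to them for the details, so you are taking the same approach as the paper (indeed, you are more explicit than the paper itself).
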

	
	\begin{Proposition}\label{thm:Main2}
		Let $X$ be a curve and let $V$ be a Brauer-like torsor under the Jacobian of $X$. Evaluation of $\calA_V$ induces an exact sequence
		\[
			0 \to \left(\Pic^0(V)\cap\Pic^0(X)\right) \to \Pic^0(X) \stackrel{\calA_V}{\To} \Br^0(V/K) \To \frac{\Br^0(X/K)}{\mathfrak{a}_X(\Pic^0(V))} \to 0,
		\]
		where $\Pic^0(V)$ is identified with a subgroup of $\Pic^0(\Xbar)^{G_K}$ via the canonical identification of $\Pic^0(\Xbar)$ and $\Pic^0(\Vbar)$.
	\end{Proposition}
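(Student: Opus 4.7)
The plan is to reinterpret evaluation of $\calA_V$ on $\Pic^0(X)$ as the restriction of $\mathfrak{a}_V$ to $\Pic^0(X)$ sitting inside the common ambient group $\Pic^0(\Xbar)^{G_K}=\Pic^0(\Vbar)^{G_K}$, and then read off the kernel, image, and cokernel directly from the basic exact sequence \eqref{eq:frakaX} applied to $V$.

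First I would verify that for any $D\in\Pic^0(X)$ the Brauer class $\calA_V(D)\in\Br(K)$ obtained by evaluating the algebra equals $\mathfrak{a}_V(D)$, where $D$ is regarded as an element of $\Pic^0(\Vbar)^{G_K}$ via the canonical identification of Jacobians. By the third Lichtenbaum pairing one has $\calA_V(D)=\langle D,\calA_V\rangle_3$, by compatibility this equals $\langle D,h_X(\calA_V)\rangle_2$, and since $h_X(\calA_V)$ is by definition the image of the class of $V$ in $\HH^1(G_K,\Pic(\Xbar))$, it in turn equals $\langle D,V\rangle_1$. Lemma~\ref{lem:Tate=frakaX} identifies this with $\mathfrak{a}_V(D)$. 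In particular, evaluation of $\calA_V$ on $\Pic^0(X)$ lands in $\Br^0(V/K)$ and factors through the inclusion $\Pic^0(X)\hookrightarrow\Pic^0(\Xbar)^{G_K}$ followed by $\mathfrak{a}_V$.

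Next, the basic sequence~\eqref{eq:frakaX} applied to $V$ gives the short exact sequence
\[
0\To\Pic^0(V)\To\Pic^0(\Xbar)^{G_K}\stackrel{\mathfrak{a}_V}{\To}\Br^0(V/K)\To 0,
\]
where $\Pic^0(V)$ is viewed in $\Pic^0(\Xbar)^{G_K}$ via the canonical identification. Restricting $\mathfrak{a}_V$ to $\Pic^0(X)\subset\Pic^0(\Xbar)^{G_K}$, the kernel is $\Pic^0(X)\cap\Pic^0(V)$ and the image is $\mathfrak{a}_V(\Pic^0(X))$. This already yields exactness at $\Pic^0(X)\cap\Pic^0(V)$, at $\Pic^0(X)$, and at $\Br^0(V/K)$ in the proposed sequence.

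Finally, to identify the cokernel I would combine surjectivity of $\mathfrak{a}_V$ with the observation that
\[
\frac{\Br^0(V/K)}{\mathfrak{a}_V(\Pic^0(X))}\;\simeq\;\frac{\Pic^0(\Xbar)^{G_K}}{\Pic^0(V)+\Pic^0(X)},
\]
and symmetrically, using the analogous sequence for $X$,
\[
\frac{\Br^0(X/K)}{\mathfrak{a}_X(\Pic^0(V))}\;\simeq\;\frac{\Pic^0(\Xbar)^{G_K}}{\Pic^0(X)+\Pic^0(V)}.
\]
These two quotients are literally the same group, giving the required identification and completing exactness at the right-hand term. The only subtle point—and what I expect to be the main obstacle to write cleanly—is the invocation of Lichtenbaum's compatibility to pass between $\langle\,,\,\rangle_3$ (which computes evaluation of $\calA_V$) and the pairings $\langle\,,\,\rangle_2,\langle\,,\,\rangle_1$ (which compute $\mathfrak{a}_V$); once this bridge is in place, the rest is a formal diagram chase.
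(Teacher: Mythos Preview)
Your proof is correct and follows the same overall strategy as the paper: use the compatibility of Lichtenbaum's pairings~\eqref{eq:LichtenbaumPairings} together with Lemma~\ref{lem:Tate=frakaX} to identify evaluation of $\calA_V$ on $\Pic^0(X)$ with the restriction of $\mathfrak{a}_V$, and then read off the exact sequence from~\eqref{eq:frakaX} applied to $V$ and to $X$. The only difference is in how the cokernel is handled. The paper writes down an explicit map $r:\Br^0(V/K)\to\Br^0(X/K)/\mathfrak{a}_X(\Pic^0(V))$, $\mathfrak{a}_V([D])\mapsto\mathfrak{a}_X([D])$, checks it is well defined, and then applies the snake lemma to the diagram with top row $0\to\Pic^0(X)\to\Pic^0(\Xbar)^{G_K}\stackrel{\mathfrak{a}_X}\to\Br^0(X/K)\to 0$ and vertical maps $\langle\,\cdot\,,\calA_V\rangle_3$ and $\mathfrak{a}_V$. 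Your symmetric identification of both $\Br^0(V/K)/\mathfrak{a}_V(\Pic^0(X))$ and $\Br^0(X/K)/\mathfrak{a}_X(\Pic^0(V))$ with the common quotient $\Pic^0(\Xbar)^{G_K}/(\Pic^0(V)+\Pic^0(X))$ is a slightly cleaner route to the same conclusion and sidesteps the need to define $r$ and verify well-definedness. One small point you left implicit, which the paper does spell out, is that $\langle D,\calA_V\rangle_3$ is independent of the choice of lift $\calA_V$ modulo constant classes because $D$ has degree zero.
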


	The proposition is proved below. First we state a corollary.

	\begin{Corollary}
		\label{cor:Curves}
		If $X$ is a curve such that $\Br^0(X/K) = 0$ and $V$ is a Brauer-like torsor under its Jacobian, then evaluation of $\calA_V$ induces an exact sequence,
		\[
			J(K) \stackrel{\calA_V}{\To} \Br(V/K) \to \frac{\per(V)}{\ind(V)} \to 0\,,
		\]
		where $J = \Jac(X)$ is the Jacobian of $X$.
	\end{Corollary}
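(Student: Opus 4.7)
The plan is to deduce this corollary as a direct consequence of Proposition~\ref{thm:Main2} combined with the short exact sequence~\eqref{eq:perind}, with only a small identification to make at the beginning.

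First, I would observe that the hypothesis $\Br^0(X/K) = 0$ forces the natural inclusion $\Pic^0(X) \hookrightarrow \Pic^0(\Xbar)^{G_K} = J(K)$ to be an isomorphism. Indeed, restricting the basic exact sequence~\eqref{eq:frakaX} to the degree zero components and using the definition of $\Br^0(X/K)$ gives
\[
0 \To \Pic^0(X) \To \Pic^0(\Xbar)^{G_K} \stackrel{\mathfrak{a}_X}\To \Br^0(X/K) \To 0,
\]
which collapses to $\Pic^0(X) = J(K)$ under the assumption.

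Next, I would feed this identification into Proposition~\ref{thm:Main2}. With $\Br^0(X/K) = 0$, the rightmost term $\Br^0(X/K)/\mathfrak{a}_X(\Pic^0(V))$ vanishes and the exact sequence of that proposition reduces to a surjection $\Pic^0(X) \surjects \Br^0(V/K)$ induced by evaluation of $\calA_V$. Substituting $\Pic^0(X) = J(K)$ yields a surjection $J(K) \surjects \Br^0(V/K)$.

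Finally, I would splice this surjection onto the short exact sequence~\eqref{eq:perind},
\[
0 \To \Br^0(V/K) \To \Br(V/K) \To \per(V)/\ind(V) \To 0,
\]
to obtain the claimed three-term sequence. Surjectivity of the map to $\per(V)/\ind(V)$ is immediate from~\eqref{eq:perind}, and exactness at $\Br(V/K)$ holds because the image of $J(K)$ in $\Br(V/K)$ equals $\Br^0(V/K)$, which is precisely the kernel of the map to $\per(V)/\ind(V)$. There is no real obstacle here; all the substantive content has been absorbed into Proposition~\ref{thm:Main2}. The only minor point requiring a sentence of justification is that evaluation of $\calA_V$ on degree zero divisors is insensitive to the $\Br(K)$-indeterminacy in the choice of $\calA_V$, since a class pulled back from $\Br(K)$ pairs trivially with any degree zero $K$-rational divisor.
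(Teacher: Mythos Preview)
Your proposal is correct and follows essentially the same approach as the paper: use $\Br^0(X/K)=0$ to identify $\Pic^0(X)=J(K)$, apply Proposition~\ref{thm:Main2} to obtain the surjection $J(K)\surjects\Br^0(V/K)$, and then splice with~\eqref{eq:perind}. Your added remarks on exactness at $\Br(V/K)$ and the $\Br(K)$-indeterminacy of $\calA_V$ are accurate but not needed beyond what the paper already records.
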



	\begin{proof}[Proof of Corollary~\ref{cor:Curves}]
	 	If $\Br^0(X/K) = 0$, then $\Pic^0(X) = \Pic^0(\Xbar)^{G_K} = J(K)$. By \edit{Proposition~\ref{thm:Main2}}\Edit{Reference corrected as suggested by referee}, evaluation of $\calA_V$ gives a surjective map $J(K) \to \Br^0(V/K)$. The exactness of~\eqref{eq:perind} then implies that the sequence in the corollary is exact.
	\end{proof}
	
	\begin{proof}[Proof of Proposition~\ref{thm:Main2}]
		Let $P \in \Pic^0(X) \subset \Pic^0(\Xbar)^{G_K}$. First note that $\langle P,\calA_V\rangle_3$ does not depend on the choice for $\calA_V$, since $P$ is a divisor class of degree $0$. By the commutativity in~\eqref{eq:LichtenbaumPairings} and Lemma~\ref{lem:Tate=frakaX} we have $\langle P, \calA_V \rangle_3 = \langle P,V\rangle_1 = \mathfrak{a}_V(P)$. Hence evaluation of $\calA_V$ induces a commutative and exact diagram
		\[
			\xymatrix{
				0 \ar[r] 
				& \Pic^0(X) \ar[r]\ar[d]^{\langle\,\,,\,\calA_V\rangle_3}
				& \Pic^0(\Xbar)^{G_K} \ar[r]^{\mathfrak{a}_X}\ar@{>>}[d]^{\mathfrak{a}_V}
				& \Br^0(X/K) \ar[r] \ar[d] & 0 \\
				0 \ar[r]
				& \Br^0(V/K) \ar@{=}[r]
				& \Br^0(V/K) \ar[r] & 0\,.			
			}
		\]
		Every element $a \in \Br^0(V/K)$ may be written as $a = \mathfrak{a}_V([D])$ for some $[D] \in \Pic^0(\Xbar)^{G_K}$, with $[D]$ determined uniquely modulo $\Pic^0(V)$. Therefore the map 
		\[
			r:\Br^0(V/K) \to \Br^0(X/K)/\mathfrak{a}_X(\Pic^0(V))
		\] 
		sending $a$ to $\mathfrak{a}_X([D])$ is well defined. Exactness in the statement in the proposition now follows by applying the snake lemma to the diagram above.
	\end{proof}

\section{Hyperelliptic curves}
\label{sec:HyperellipticCurves}

	Consider a hyperelliptic curve $X:y^2 = cf(x)$, where $f(x) \in K[x]$ is a square free monic polynomial over a field of characteristic not equal to $2$. The $x$-coordinate defines a degree $2$ map $\pi:X \to \PP^1_K$, embedding $K(x)$ into $\kk(X)$ as an index $2$ subfield. We set $L = K[\theta]/f(\theta)$\Edit{The usage of $L$ and $L_\circ$ has been clarified/corrected, as suggested by the referee}, and for $\ell \in L^\times$ set 
\[
	\calA_\ell := \left(\Cor_{L(x)/K(x)}(\ell,x-\theta)_2\right) \otimes_{K(x)}\kk(X)\,.
\]
	This is a central simple algebra over $\kk(X)$ which can be written explicitly as a tensor product of quaternion algebras (see~\cite[Proposition 2.4]{CVCurves}). In this section we show that for suitable torsors $V$ the class $\calA_V \in \Br(X)$ can be represented by an algebra of the form $\calA_\ell$.

	We say that $X$ is \defi{odd} if $\deg(f)$ is odd. Otherwise we say that $\edit{X}$ is \defi{even}. If $X$ is odd, we can perform a change of coordinates to arrange that $c = 1$. Let $\Omega \subseteq X$ be the set of ramification points of $\pi$, and let $\edit{L_\circ} = \Map_K(\Omega,\Kbar)$ denote the \'etale $K$-algebra corresponding to $\Omega$. When $X$ is even we may identify $L$ and $L_\circ$. When $X$ is odd, \edit{$L$} can be identified with the subalgebra \edit{of $L_\circ$} consisting of elements in $\Map_K(\Omega,\Kbar)$ that take the value $1$ at the ramification point above $\infty \in \PP^1(K)$. In the odd case this gives an isomorphism $\edit{L_\circ \cong L \times K}$.
	
	Set 
	\[
		\mathfrak{L} = \frac{L_\circ^\times}{K^\times L_\circ^{\times 2}}
	\]
	\edit{where $K^\times \subset L_\circ^\times = \Map_K(\Omega,\Kbar^\times)$ here denotes the multiplicative group of nonzero constant maps.}
	For $a \in K^\times$ and $\ell \in L^\times$, we use $\overline{a}$ and $\overline\ell$ to denote the corresponding classes in $K^\times/K^{\times2}$ and $\mathfrak{L}$, and set
	\[  
		\mathfrak{L}_a = \left\{ \overline{\ell} \in \mathfrak{L} \,:\, \Norm_{L/K}\left(\overline{\ell}\right)\in\langle \overline{a}\rangle \right\}\,, 
	\]
	where $\Norm_{L/K}$ denotes the map $\mathfrak{L} \to K^\times/K^{\times 2}$ induced by the norm on $\edit{L_\circ}$. Note that when $X$ is odd we have an isomorphism $\mathfrak{L} \simeq L^\times/L^{\times 2}$ under which $\Norm_{L/K}$ coincides with the map induced by the norm on $\edit{L}$. 

	\subsection{Odd hyperelliptic curves}

		Suppose that $X$ is odd (i.e., that $f(x)$ has odd degree). Schaefer \cite{Schaefer}, building on work of Cassels \cite{CasselsGenus2} defined an isomorphism of $G_K$-modules between $J[2]$ and $\ker\left(\Norm_{L/K}:\mu_2(\edit{\overline{L}}) \to \mu_2(\Kbar)\right)$. From this one obtains an isomorphism $e: \mathfrak{L}_1 \simeq \HH^1(G_K,J[2])$. The Kummer sequence associated to $J$ gives a surjective map $\HH^1(G_K,J[2]) \to \HH^1(G_K,J(\Kbar))[2] = \HH^1(G_K,\Pic^0(\Xbar))[2]$. Furthermore, the $K$-rational point $0_X \in X(K)$ lying above $\infty \in \PP^1_K(K)$ gives rise to isomorphisms,
		\[
			\Br(X,0_X) \stackrel{h_X}\simeq \HH^1(G_K,\Pic(\Xbar)) \simeq \HH^1(G_K,\Pic^0(\Xbar)).
		\]
		Hence there exists a homomorphism $\gamma$, uniquely determined by the requirement that the following diagram commute.
		\begin{equation}\label{eq:oddcase}
			\xymatrix{
				\HH^1(G_K,\Pic^0(\Xbar)[2]) \ar@{>>}[r]
				&\HH^1(G_K,\Pic^0(\Xbar))[2]\\
				\mathfrak{L}_1 \ar[r]^\gamma\ar[u]^e_{\rotatebox{90}{$\sim$}}
				&\Br(X,0_X)[2] \ar[u]^{h_X}_{\rotatebox{90}{$\sim$}}
			}
		\end{equation}
		Given $\ell \in L^\times$ of square norm, let $V_\ell$ be a torsor under $J$ representing the image of $\ell$ in the top right corner of the diagram. \edit{When $f(x)$ has degree $3$ (so that $X=J$ is an elliptic curve in Weierstrass form) the class of this torsor coincides with the torsor constructed in~\eqref{eq:V_ell} (see Theorem~\ref{thm:Appendix}).}
		
		   In~\cite{CVCurves} it is shown that $\gamma$ is induced by the map $\ell \mapsto \calA_\ell$. The commutativity of the diagram therefore implies that $\calA_\ell$ represents the class $\calA_{V_\ell}$ of Definition~\ref{def:BrauerClassOnTheCurve}. Applying Corollary \ref{cor:Curves} we obtain the following.

		\begin{Theorem}
			Suppose $X$ is an odd hyperelliptic curve and $V$ is a torsor of period $2$ under $J$. Then $V \simeq V_\ell$ for some $\ell \in L^\times$ of square norm. The algebra $\calA_\ell$ represents the class $\calA_V \in \Br(X,0_X)$ and is split if and only if $V(K)\ne\emptyset$. Moreover, evaluation of $\calA_\ell$ on $K$-rational divisors of degree $0$ induces an exact sequence.
			\[
				\frac{J(K)}{2J(K)} \stackrel{\calA_\ell}\To \Br(V/K) \To \frac{\per(V)}{\ind(V)} \To 0.
			\]	
		\end{Theorem}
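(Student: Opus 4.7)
The plan is to assemble the four assertions from the commutative diagram~\eqref{eq:oddcase}, the fact (established in \cite{CVCurves}) that $\gamma:\mathfrak{L}_1 \to \Br(X,0_X)[2]$ sends $\ell$ to $[\calA_\ell]$, and Corollary~\ref{cor:Curves}. The argument is essentially a sequence of diagram chases, with no serious technical obstacle; the care needed is mostly in matching the identifications already asserted before the theorem statement.

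To produce $\ell$: because $V$ has period $2$, its class lies in $\HH^1(G_K,J(\Kbar))[2] = \HH^1(G_K,\Pic^0(\Xbar))[2]$, which is the codomain of the right-hand column of~\eqref{eq:oddcase}. Any lift along the Kummer surjection $\HH^1(G_K,J[2])\twoheadrightarrow \HH^1(G_K,J(\Kbar))[2]$, transported through $e^{-1}$, produces an $\ell \in \mathfrak{L}_1$, and by the definition of $V_\ell$ we then have $V \simeq V_\ell$. For the identification $[\calA_\ell] = \calA_V$ in $\Br(X,0_X)$: commutativity of~\eqref{eq:oddcase} together with $\gamma(\ell)=[\calA_\ell]$ shows that $h_X([\calA_\ell])$ is the image of $V_\ell = V$ in $\HH^1(G_K,\Pic(\Xbar))$, which is precisely the defining property of $\calA_V$ (Definition~\ref{def:BrauerClassOnTheCurve}); since $h_X$ restricts to an isomorphism on $\Br(X,0_X)$ by Lemma~\ref{lem:EvalSplits}, the two classes agree. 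The splitting criterion then follows at once: $\calA_\ell$ is split iff $\calA_V = 0$ iff $V$ is trivial in $\HH^1(G_K,J(\Kbar))$ (via the isomorphisms at the top of~\eqref{eq:oddcase}) iff $V(K) \neq \emptyset$.

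For the final exact sequence, the key preliminary is that $\Br^0(X/K)=0$. This is immediate from the fact that the Weierstrass point $0_X\in X(K)$ gives an evaluation map $\Br(X)\to\Br(K)$ splitting the inflation $\Br(K)\to\Br(X)$; the inflation is therefore injective, and the exactness of~\eqref{eq:frakaX} forces $\mathfrak{a}_X=0$. Applying Corollary~\ref{cor:Curves} with $\calA_V$ represented by $\calA_\ell$ then yields an exact sequence $J(K) \stackrel{\calA_\ell}{\To} \Br(V/K) \To \per(V)/\ind(V) \To 0$. Finally, since $\calA_\ell \in \Br(X,0_X)[2]$ is $2$-torsion, the induced homomorphism $J(K) \to \Br(V/K)$ annihilates $2J(K)$, producing the claimed sequence with $J(K)/2J(K)$ in place of $J(K)$.
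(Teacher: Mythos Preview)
Your proposal is correct and follows essentially the same approach as the paper: the paper derives the theorem directly from the commutative diagram~\eqref{eq:oddcase}, the identification $\gamma(\ell)=[\calA_\ell]$ from \cite{CVCurves}, and Corollary~\ref{cor:Curves}, exactly as you do. Your write-up is in fact somewhat more explicit than the paper's, spelling out why $\Br^0(X/K)=0$ (via the retraction given by $0_X$) and why the evaluation map factors through $J(K)/2J(K)$ (since $\calA_\ell$ is $2$-torsion), both of which the paper leaves implicit.
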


	\subsection{Even hyperelliptic curves}\label{subsec:even}
		The case of even hyperelliptic curves is somewhat more involved. Suppose $\ell \in L^\times$ represents a class in $\mathfrak{L}_c$ and let $n \in \{ 0,1\}$ be such that $c^nN_{L/K}(\ell) \in K^{\times 2}$.\Edit{As noted by the referee there was a missing exponent of $2$.} Then $\ell$, together with a choice of square root for $c^nN_{L/K}(\ell)$, can be used to construct a torsor $V_\ell$ under $J$. For $n=0$ this follows from work of Poonen and Schaefer~\cite{PoonenSchaefer}. When $X$ has genus $2$ Flynn, Testa and Van Luijk~\cite{FTVL} give an explicit construction of $V_\ell$ as an intersection of $72$ quadrics in $\PP^{15}$. For $n=1$ this is developed in~\cite{CreutzANTSX}. See also \cite[Theorems 23 and 24]{BGW} for an alternative description of this torsor. When $X$ has genus $1$, \edit{the proof of \cite[Proposition 5.4]{CreutzANTSX} shows that $V_\ell$ is given by the construction in~\eqref{eq:V_ell2}. In this case the} $V_\ell$ are the torsors produced by a $4$-descent as described in~\cite{MSS4d}.

		The class of $V_\ell$ in $\HH^1(G_K,J(\Kbar))$ may depend up to inverse on the choice of square root for $c^nN_{L/K}(\ell)$, but the image of this class in $\HH^1(G_K,J(\Kbar))/\langle \Pic^1_X\rangle$ does not. These torsors also have the property that $2V_\ell = n\Pic^1_X$ in $\HH^1(G_K,J(\Kbar))$. This gives rise to a homomorphism 
		\begin{equation}\label{eq:construct}
			\mathfrak{L}_c \To \left(\frac{\HH^1(G_K,J(\Kbar))}{\langle \Pic^1_X \rangle}\right)[2] \simeq \HH^1(G_K,\Pic(\Xbar))[2]\,.
		\end{equation}
		It follows from \cite{CVCurves} that this homomorphism coincides with the composition
		\[
			\mathfrak{L}_c \stackrel{\gamma}\To \Br(X)/\Br_0(X) \stackrel{h_X}\To \HH^1(G_K,\Pic(\Xbar))\,,
		\]
		where $\gamma$ is induced by the map $\ell \mapsto \calA_\ell$ and $\Br_0(X)$ denotes the image of $\Br(K)$ in $\Br(X)$. This implies that $V_\ell$ is Brauer-like and that the algebra $\calA_\ell$ is a representative for $\calA_{V_\ell}$. Combining this with Corollary~\ref{cor:Curves} we obtain the following.

		\begin{Theorem}
			\label{thm:highergenus}
			Suppose $\Br^0(X/K) = 0$ and that the norm of $\ell \in L^\times$ lies in $c^nK^{\times 2}$. Let $V_\ell$ be a torsor under $J$ corresponding to a choice of square root for $c^nN_{L/K}(\ell)$, and let $\calA_\ell$ be the $\kk(X)$-algebra defined above. Then evaluation of $\calA_\ell$ on $K$-rational degree $0$ divisors induces an exact sequence
			\[
				\frac{J(K)}{2J(K)} \stackrel{\calA_\ell}{\To} \Br(V_\ell/K) \To \frac{\per(V_\ell)}{\ind(V_\ell)} \To 0.
			\]
		\end{Theorem}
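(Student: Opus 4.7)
The plan is to deduce the theorem directly from Corollary~\ref{cor:Curves}, reducing the task to verifying that $V_\ell$ is Brauer-like and that $\calA_\ell$ represents the canonical class $\calA_{V_\ell}$ of Definition~\ref{def:BrauerClassOnTheCurve}. Essentially all the heavy lifting is carried out in the discussion preceding the statement; what remains is bookkeeping.

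First I would record that, by construction, the map~\eqref{eq:construct} sends $\overline{\ell}$ to the image of the class of $V_\ell$ in $\HH^1(G_K,\Pic(\Xbar))$. Importing from~\cite{CVCurves} the identification of~\eqref{eq:construct} with the composition
\[
\mathfrak{L}_c \stackrel{\gamma}{\To} \Br(X)/\Br_0(X) \stackrel{h_X}{\To} \HH^1(G_K,\Pic(\Xbar)),
\]
where $\gamma$ is induced by $\ell \mapsto \calA_\ell$, one concludes that the image of $V_\ell$ in $\HH^1(G_K,\Pic(\Xbar))$ lies in $\im(h_X)$ and that $\calA_\ell$ is a valid representative for $\calA_{V_\ell}$.

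With these identifications in hand, Corollary~\ref{cor:Curves} (applied using the assumed vanishing of $\Br^0(X/K)$) yields exactness of
\[
J(K) \stackrel{\calA_\ell}{\To} \Br(V_\ell/K) \To \frac{\per(V_\ell)}{\ind(V_\ell)} \To 0.
\]
To recover the stated sequence I would show that this evaluation map factors through $J(K)/2J(K)$. Since $(\ell,x-\theta)_2$ is a quaternion algebra, its class in $\Br(L(x))$ is $2$-torsion; corestriction and pullback preserve $2$-torsion, so $\calA_\ell \in \Br(X)[2]$. Bilinearity of Lichtenbaum's pairing $\langle\,,\,\rangle_3$ then gives $\langle 2P,\calA_\ell\rangle_3 = 2\langle P,\calA_\ell\rangle_3 = 0$ for every $P \in J(K)$. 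The only genuinely subtle step is importing the identification of~\eqref{eq:construct} with $h_X \circ \gamma$ from~\cite{CVCurves}; once it is taken as given, the remainder of the proof is formal.
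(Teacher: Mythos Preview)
Your proposal is correct and follows the paper's own argument essentially verbatim: the paper also deduces the theorem from Corollary~\ref{cor:Curves} after invoking \cite{CVCurves} to identify~\eqref{eq:construct} with $h_X\circ\gamma$, thereby showing $V_\ell$ is Brauer-like with $\calA_\ell$ representing $\calA_{V_\ell}$. Your explicit justification that the evaluation map factors through $J(K)/2J(K)$ (via $2$-torsion of $\calA_\ell$ and bilinearity of $\langle\,,\,\rangle_3$) is a detail the paper leaves implicit, so in that respect your write-up is slightly more complete.
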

		
		It is worth mentioning that the homomorphism in~\eqref{eq:construct} \edit{need not be} surjective.\Edit{This paragraph has been changed to answer a question of the referee.} \edit{In general there exist} Brauer-like torsors of period $2$ under $J$ which are not of the form $V_\ell$. \edit{For example, see \cite[Proposition 5.1 and Remark 5.4]{CVCurves}}. We also note that the hypothesis $\Br^0(X/K)=0$ is satisfied quite frequently, as suggested by the following lemma.

		\begin{Lemma}
			Suppose $X$ is a hyperelliptic curve of even genus defined over a local or global field. Then $\Br^0(X/K) = 0$.
		\end{Lemma}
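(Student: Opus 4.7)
The plan is to establish $\Br^0(X/K) = 0$ by proving the equivalent condition $\Pic^0(X) = J(K)$, which emerges from the exact sequence $0 \to \Pic^0(X) \to J(K) \stackrel{\mathfrak{a}_X}{\to} \Br^0(X/K) \to 0$ derived in the discussion preceding~\eqref{eq:perind}. That is, every $K$-rational divisor class of degree zero on $X$ must admit a $K$-rational divisor representative.

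I would split on the parity of $\deg f$ in the model $y^2 = cf(x)$. If $\deg f$ is odd, the Weierstrass point above $\infty$ is $K$-rational, so $X(K) \neq \emptyset$; Lemma~\ref{lem:EvalSplits} applied at this point furnishes a retraction of $\Br(K) \to \Br(X)$ and hence $\mathfrak{a}_X \equiv 0$, yielding $\Br^0(X/K) = 0$. If $\deg f = 2g+2$ is even, then since $g$ is even by hypothesis, $g-1$ is odd; the hyperelliptic class $g_2^1$ of degree $2$ gives $2[\Pic^1_X] = 0$, so $[\Pic^1_X] \in \HH^1(G_K, J(\Kbar))[2]$. Over a non-archimedean local field, Lichtenbaum's duality in the form of Lemma~\ref{lem:Tate=frakaX} (applied to the torsor $V = \Pic^1_X$, combined with the identification of $\mathfrak{a}_X$ and $\mathfrak{a}_V$ provided by Lemma~\ref{lem:IsoSES}(i)) shows that $\mathfrak{a}_X$ on $J(K)$ is cup product against $[\Pic^1_X]$; by the perfection of Tate's pairing, its image is cyclic of order $\per(X)$, and hence $\Br^0(X/K) = 0$ is equivalent to $[\Pic^1_X] = 0$.

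To establish $[\Pic^1_X] = 0$ in the even $\deg f$ case, I would employ the Cassels--Schaefer--Poonen descent of Section~\ref{sec:HyperellipticCurves}: the class $[\Pic^1_X]$ has a canonical lift to an element of $\mathfrak{L}_c$, and a parity argument exploiting the oddness of $g-1$ should show that this lift factors through the Kummer boundary $J(K)/2J(K) \to \HH^1(G_K, J[2])$, forcing the image $[\Pic^1_X] \in \HH^1(G_K, J(\Kbar))$ to vanish. Over a global field, one reduces to the local case via the Hasse principle $\Br(K) \hookrightarrow \bigoplus_v \Br(K_v)$, handling archimedean and residue-characteristic-$2$ places by auxiliary arguments. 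I expect the main obstacle to be the parity computation within the Schaefer descent framework, requiring careful tracking of the norm map on $L_\circ^\times/L_\circ^{\times 2}$ and its compatibility with the evenness of $\deg f$.
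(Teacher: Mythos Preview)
The paper's proof is a one-line appeal to \cite[Propositions~3.3 and~3.4]{PoonenSchaefer}, which directly establish that $\Pic^0(X)\to \Pic^0(\Xbar)^{G_K}$ is an isomorphism for hyperelliptic curves of even genus over local or global fields; the vanishing of $\Br^0(X/K)$ is then immediate from its definition.

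Your proposal is effectively an attempt to reprove this result. The odd-$\deg f$ case and the global-to-local reduction via $\Br(K)\hookrightarrow\bigoplus_v\Br(K_v)$ are fine. Over a non-archimedean local field your reduction of $\Br^0(X/K)=0$ to $[\Pic^1_X]=0$ via the nondegeneracy of Tate's pairing is also correct, though it already invokes a substantial input. The genuine gap is the step you yourself flag: you give no argument for $[\Pic^1_X]=0$. The sketch you offer---a ``canonical lift'' of $[\Pic^1_X]$ to $\mathfrak{L}_c$ together with a ``parity argument exploiting the oddness of $g-1$''---is not an argument. There is no canonical lift of a class in $\HH^1(K,J)[2]$ to $\HH^1(K,J[2])$; the phrase ``factors through the Kummer boundary'' has no clear meaning here; and nothing in your sketch actually uses the hypothesis that $K$ is local, which must enter somewhere concrete since the conclusion is not available over arbitrary fields. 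You also leave archimedean and residue-characteristic-$2$ places entirely open. What remains to be supplied is precisely the content of the Poonen--Schaefer propositions the paper cites, so your route is longer but terminates at the same black box.
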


		\begin{proof}
			The hypothesis implies that $\Pic^0(X) \to \Pic^0(\Xbar)^{G_K}$ is an isomorphism; see \cite[Propositions 3.3 and 3.4]{PoonenSchaefer}.
		\end{proof}
		
		\section{Genus one curves}
		\label{sec:Genus1}

			We now specialize to the case that $X$ is a genus $1$ hyperelliptic curve.

\subsection{Elliptic curves}\label{subsec:EC2}
	If $X$ is odd, then it is an elliptic curve in Weierstrass form. In this section we prove Theorem~\ref{thm:Elliptic} which allows us to compute the relative Brauer group of any torsor under $X = \Jac(X)$ of period $2$ (assuming we are able to compute the group $X(K)$ of rational points on $X$). Recall \edit{from Section~\ref{subsec:EC}}\Edit{Clarified at the request of the referee.} that for $\ell \in L^\times$ of square norm, the corresponding torsor under $X = \Jac(X)$ is the curve $V_\ell \subset \PP^3_K$ defined by
	\begin{equation}
			V_\ell : Q_\ell^{(1)}(u,v,w) + t^2 = Q_\ell^{(2)}(u,v,w) = 0\,,		
	\end{equation}
	where, for $0 \le i \le 2$, $Q_\ell^{(i)}(u,v,w) \in K[u,v,w]$ are the ternary quadratic forms uniquely determined by requiring that $\sum \theta^iQ_\ell^{(i)} = \ell(u + \theta v + \theta^2 w)^2.$ Let $C_\ell \subset \PP^2_K$ be the conic defined by $Q^{(2)}_\ell(u,v,w) = 0$.
	\Edit{Remark 5.1 has been removed since this has been superseded by the Appendix.}
	\begin{Lemma}
		\label{lem:conic}
		Let $P_0 = (u_0:v_0:w_0) \in C_\ell(\Kbar)$ be any point, and let 
		\[
			D = \left(u_0:v_0:w_0: \sqrt{-Q^{(1)}_\ell(u_0,v_0,w_0)}\right) + \left(u_0:v_0:w_0:-\sqrt{-Q^{(1)}_\ell(u_0,v_0,w_0)}\right) \in \Div(\Vbar_\ell)\,.
		\]
		Then
		\begin{enumerate}
			\item The class $[D]$ of $D$ in $\Pic(\Vbar_\ell)$ is $G_K$-invariant, and
			\item $\mathfrak{a}_{V_\ell}([D])$ is represented by the class of the conic $C_\ell$ in  $\Br(K)$
		\end{enumerate}
	\end{Lemma}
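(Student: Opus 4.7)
The plan is to interpret the divisor $D$ as the scheme-theoretic fiber over $P_0$ of the projection $\pi\colon V_\ell \to C_\ell$ away from the point $(0{:}0{:}0{:}1)$, given by $(u{:}v{:}w{:}t) \mapsto (u{:}v{:}w)$, and then apply Lemma~\ref{lem:frakaX2}. From the defining equation $Q^{(1)}_\ell(u,v,w) + t^2 = 0$ the two points in the divisor $D$ are precisely the preimages of $P_0$ under $\pi$, so $D = \pi^* P_0$.

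For part (1), I would use that $\overline{C_\ell} \simeq \PP^1_{\Kbar}$, so $\Pic(\overline{C_\ell}) \simeq \Z$ and any two points of $\overline{C_\ell}$ are linearly equivalent. Since $\pi$ is defined over $K$, for any $\sigma \in G_K$ we have $\sigma(D) = \pi^*(\sigma P_0)$, and the class $[\sigma P_0] = [P_0]$ in $\Pic(\overline{C_\ell})$ pulls back to $[\sigma D] = [D]$ in $\Pic(\overline{V_\ell})$. This shows the class $[D]$ is independent of the choice of $P_0$ and $G_K$-invariant.

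For part (2), by Riemann--Roch on the genus one curve $V_\ell$, one has $h^0(D) = 2$, so $|D|$ is a $\PP^1$. The assignment $Q \mapsto \pi^* Q$ gives a one-dimensional family of effective divisors inside $|D|$ parameterized by $C_\ell$; by dimension reasons this exhausts $|D|$. In particular $|D|$ is base-point-free, since for any $p \in \Vbar_\ell$ one can choose $Q \in C_\ell(\Kbar)$ with $Q \ne \pi(p)$, so $p \notin \pi^* Q$. Hence Lemma~\ref{lem:frakaX2} applies and identifies $\mathfrak{a}_{V_\ell}([D])$ with the class in $\Br(K)$ of the Brauer-Severi variety $S$ parameterizing $|D|$.

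It remains to identify $S$ with $C_\ell$ as $K$-varieties. The map $Q \mapsto \pi^* Q$ gives a bijection $C_\ell(\Kbar) \to |D|(\Kbar)$, and this bijection is Galois-equivariant because $\pi$ is defined over $K$, so $\sigma(\pi^* Q) = \pi^*(\sigma Q)$. Unwinding the descent data in the construction of $S$ from Lemma~\ref{lem:frakaX2} shows that this geometric isomorphism descends to a $K$-isomorphism $C_\ell \simeq S$. The main technical point is this last compatibility with the descent cocycle $\tilde\phi_\sigma$, but it follows formally from $\pi$ being $K$-rational and the isomorphism $\pi^*\colon H^0(\overline{C_\ell},\calO(P_0)) \to H^0(\Vbar_\ell,\calO(D))$ being Galois-equivariant up to the same scalar cocycle $f_\sigma$ used to define $\tilde\phi_\sigma$. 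Combining these steps yields $\mathfrak{a}_{V_\ell}([D]) = [C_\ell]$ in $\Br(K)$.
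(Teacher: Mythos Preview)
Your argument is correct and follows the same approach as the paper. Both proofs recognize $D = \pi^*P_0$ for the projection $\pi\colon V_\ell \to C_\ell$ away from $(0{:}0{:}0{:}1)$, establish $G_K$-invariance of $[D]$ via linear equivalence of points on $\overline{C_\ell}\simeq\PP^1_{\Kbar}$, and then invoke Lemma~\ref{lem:frakaX2}. The paper's proof of part~(2) is a single sentence (``follows from Lemma~\ref{lem:frakaX2}''), leaving implicit the identification of the Brauer--Severi variety $S=|D|$ with $C_\ell$; you have correctly supplied this identification by checking that $Q\mapsto\pi^*Q$ is a Galois-equivariant bijection compatible with the descent cocycle $\tilde\phi_\sigma$.
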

	
	\begin{proof}
		Let $\pi:V_\ell \to C_\ell$ be the degree $2$ mapped obtained by projecting away from the point $(0:0:0:1) \in \PP^3_K$. Then $D = \pi^*P_0$ and for $\sigma \in G_K$ we have $\sigma(D) = \pi^*(\sigma(P_0))$ which shows that $D$ and $\sigma(D)$ are linearly equivalent. This proves the first statement. The second follows from Lemma~\ref{lem:frakaX2}.
	\end{proof}

We are now in a position to prove Theorem~\ref{thm:Elliptic}. 

	\begin{proof}[Proof of Theorem~\ref{thm:Elliptic}]
		If $V_\ell(K) \ne \emptyset$ the statement holds trivially. Hence we may assume that $V_\ell$ has no $K$-rational point. This implies (using Riemann-Roch) that $V_\ell$ has no $K$-rational divisor class of degree $1$. It follows that $\per(X)$ is generated by the image of any $K$-rational divisor class of degree $2$. By Lemma~\ref{lem:conic}, the class of $C_\ell$ is equal to $\mathfrak{a}_{V_\ell}([D])$, where $[D]$ is a $K$-rational divisor class of degree $2$ on $V_\ell$. So the class of $C_\ell$ in $\Br(K)$ must generate $\Br(V_\ell/K)/\Br^0(V_\ell/K)$. Since, by Theorem~\ref{thm:Hyperelliptic}, evaluation of $\calA_\ell$ induces a surjective map $X(K) \to \Br^0(V_\ell/K)$, we have shown that $\Br(V_\ell/K)$ is generated by the set $\left\{ \calA_\ell(P) \;:\; P \in X(K) \right\} \cup \{ [C_\ell] \}\,.$
		
	To prove the second statement of the theorem, note that when $V_\ell(K) = \emptyset$ we have $\per(V_\ell) = \ind(V_\ell)$ if and only if $\Pic^2(V_\ell)\ne \emptyset$, which is the case if and only if there is some $M \in \Pic^2(\Vbar_\ell)^{G_K}$ such that $\mathfrak{a}_{V_\ell}(M) = 0$. Since $\Pic^2(\Vbar_\ell)^{G_K} = [D] + \Pic^0(\Vbar_\ell)^{G_K}$ and $\mathfrak{a}_{V_\ell}$ is a homomorphism, we conclude that $\per(V_\ell) = \ind(V_\ell)$ if and only if $[C_\ell]$ lies in $\left\{ \calA_\ell(P) \;:\; P \in X(K) \right\}$.
	\end{proof}
	
	\subsection{Curves of index $4$}
		\label{subsec:C4}
		Now suppose $X$ is even, that is $X:y^2 = cf(x)$ where $f(x)$ is a square free monic quartic polynomial. As described in Section~\ref{subsec:even} an element $\ell \in L^\times$ such that $cN_{L/K}(\ell) \in K^{\times 2}$ gives rise to a torsor $V_\ell$ under $J$ such that $2V_\ell = X$ in the Weil-Ch\^atelet group of the Jacobian. This may be given explicitly as an intersection of quadric surfaces in $\PP^3$. Namely, $V_\ell : Q_\ell^{(2)}(t,u,v,w) = Q_\ell^{(3)}(t,u,v,w) = 0$, where for $0\le i \le 3$, $Q_\ell^{(i)}(t,u,v,w)$ are the quadratic forms defined by 
		\[
			\sum_{0\le i \le 3} \theta^iQ_\ell^{(i)}(t,u,v,w) = \ell(t + \theta u + \theta^2 v + \theta^3w)^2.
		\]
		This defines a genus one curve of index dividing $4$. 

		Conversely, given a genus one curve $V$ of index $4$, the complete linear system corresponding to any $K$-rational divisor of degree $4$ defines an embedding of $V$ in $\PP^3$ as an intersection of quadric surfaces. If $M_1$ and $M_2$ denote the symmetric matrices corresponding to these quadrics, then $\edit{g}(x) := \det(M_1x+M_2)$ is a quartic polynomial\Edit{$f$ replaced by $g$ as suggested by the referee}, $V$ is a torsor under the Jacobian of the hyperelliptic curve $X : y^2 = g(x)$, and $2V = X$ in the Weil-Ch\^atelet group of the Jacobian (see~\cite{MSS4d}). Moreover, there exists some $\ell \in L^\times$ of norm a square times the leading coefficient of $g(x)$ such that $V \simeq V_\ell$ (an explicit formula for $\ell$ is given in~\cite[Section 5]{Fisher4d}).

\Edit{Lemma 5.3 removed since, as the referee noted, it is a trivial consequence of Lemma 3.5}
		\begin{proof}[Proof of Theorem~\ref{thm:index4}]
			Statements (1), (2) and (3) in the theorem follow from the discussion above. It remains to prove (4).

By assumption $V$ has period $4$, so $X$ has period $2$. By construction, the index of $X$ divides $2$, so it must in fact be $2$. Then both $V$ and $X$ have equal period and index, so $\Br(V/K) = \Br^0(V/K)$ and $\Br(X/K) = \Br^0(X/K)$.

			Proposition~\ref{thm:Main2} gives an exact sequence
			\begin{equation}
				\label{eq:inproof}
				0 \to \Pic^0(V)\cap\Pic^0(X) \to \Pic^0(X) \stackrel{\calA_\ell}{\To} \Br^0(V_{\ell}/K) \stackrel{r}\To \frac{\Br^0(X/K)}{\mathfrak{a}_X(\Pic^0(V))} \to 0,
			\end{equation}
			where the map $r$ is induced by sending $\mathfrak{a}_V([D]) \in \Br^0(V/K)$ to the class of $\mathfrak{a}_X([D]) \in \Br^0(X/K)$. \edit{Let $J$ denote the Jacobian of $V$ and $X$. Since the pairing $\langle\,,\,\rangle_1$ is bilinear, Lemma~\ref{lem:Tate=frakaX} implies that $\mathfrak{a}_X(P) = 2\mathfrak{a}_V(P)$, for any $P \in J(K) \cong \Pic^0(\Vbar)^{G_K} \cong \Pic^0(\Xbar)^{G_K}$. This shows that $\ker(\mathfrak{a}_V) \subset \ker(\mathfrak{a}_X)$. Hence we have $\Pic^0(V) = \ker(\mathfrak{a}_V)\cap J(K) \subset \ker(\mathfrak{a}_X)\cap J(K) = \Pic^0(X)$ and $\mathfrak{a}_X(\Pic^0(V)) = 2\mathfrak{a}_{V}(\Pic^0(V))=0$. The fact that $\mathfrak{a}_X(P) = 2\mathfrak{a}_V(P) = \mathfrak{a}_V(2P)$ also shows that $r$ is induced by multiplication by $2$. From this we see that} \eqref{eq:inproof}\Edit{This paragraph has been rewritten (1) to address the removal of Lemma 5.3 above, and (2) to give a more detailed explanation of the containment $\Pic^0(V) \subset \Pic^0(X)$ as requested by the referee.} can be rewritten as
			\begin{equation}\label{eq:target}
				0 \to \Pic^0(V) \to \Pic^0(X) \stackrel{\calA_\ell}\To \Br(V/K) \stackrel{2} \To \Br(X/K) \To 0\,.
			\end{equation}
		\end{proof}

\section{Examples}
	\label{sec:Examples}
	
		All computations in the examples below were performed using Magma (described in \cite{magma}). The code used to perform these computations may be found in the source of the arXiv distribution of this article.

		\subsection{A torsor of index $2$ under an elliptic curve}
			\label{subsec:ind2}
			
			We begin by illustrating how the results of this paper can be used to reproduce calculations of~\cite{HaileHan}. Namely, we show how to compute the relative Brauer group of a torsor of index $2$ under an elliptic curve (provided generators for the Mordell-Weil group are known). We will compute the relative Brauer group of
			\[
				V/\Q:y^2 = -2x^4 +6x^2+4x-14\,,
			\]
			considered in~\cite[Example 4]{HaileHan}.
			
			The Jacobian of $V$ is the elliptic curve $X/\Q : y^2 = f(x)$, where $f(x) =  x^3 -27\calI x-27\calJ$ with $\calI = 372$ and $\calJ = 12528$ the classical rational invariants associated to the quartic $g(x) = -2x^4+6x+4x-14$. Let $L = \Q[\theta]/f(\theta)$. We can find $\ell \in L^\times$ such that $V \simeq V_\ell$ using~\cite[Proposition 3.1]{Cremona} (see also~\cite[Section 2]{CremonaFisher}). The {\em cubic invariant} of $g(x)$ is $-\frac{8}{3}\varphi + 32$, where $\varphi$ is a root of the resolvent cubic $h(x) = x^3 -3\calI x+\calJ$ of $g(x)$. Noting that $\theta = -3\varphi$, it then follows from~\cite[Theorem 11]{CremonaFisher} that $V \simeq V_\ell$ for
			\[
				\ell = \frac{8}{9}\theta + 32 \equiv 2\theta + 72 \pmod{ L^{\times 2}}
			\]
			The corresponding algebra $\calA_\ell$ is obtained by extension of scalars of the $\Q(x)$-algebra,
			\[
				\calA_\ell' = \Cor_{L(x)/\Q(x)}( (2\theta+72,x-\theta)_2)\,.
			\]
			Let $r(x) = 2x + 72$ and $a = -2^53^6 = f(x) \bmod r(x)$. By~\cite[Proposition 2.4]{CVCurves},
			\[
				[\calA_\ell'] = [f(x),r(x)]_2 + [r(x),a]_2 + [1,2]_2 + [2,a]_2\,.
			\]
			Since $(1,2)_2$, $(2,-2)_2$ and $(2,a)_2$ are split and $f(x)$ is a square in $\kk(X)$, it follows that
			\[
				[\calA_\ell] = [2x+72,-2]_2 = [x+36,-2]_2\,.
			\]
			(This should be compared with the algebra $A_g$ defined in~\cite[Section 2]{HaileHan}.)
			
			The group $E(\Q)$ is generated by the points $P_1 =\left(-72, 108 \right)$ and $P_2 =  \left(450,9288 \right).$ Evaluating $\calA_\ell$ at these points gives
			\begin{align*}
				[\calA_\ell(P_1)] &= [-36,-2]_2 = [-1,-1]_2, \quad \text{and}\\
				[\calA_\ell(P_2)] &= \left[486,-2 \right]_2 = [6,-2]_2 = 0\,.
			\end{align*}
			So, by Theorem~\ref{thm:Elliptic}, $\Br(V/\Q) \simeq \Z/2\Z$ is generated by $[-1,-1]_2$.

			\begin{Remark}
				In \cite[Example 4]{HaileHan} the Jacobian is given as $E : y^2 = x^3 - 12x^2 - 76x - 32$. The map $(x,y) \mapsto (9x-36,27y)$ defines an isomorphism between $E$ and $X$ under which $P_1$ and $P_2$ are the images of the points $(-4,4)$ and $(54,344)$, respectively.
			\end{Remark}

	\subsection{Torsors of period $2$ under elliptic curves with full rational $2$-torsion}
		Let us consider an elliptic curve over $K$ with full rational $2$-torsion, say
		\[
			J : y^2 = x(x-a)(x-b)\,,\quad \text{ with } a,b \in K^\times
		\]
		Then every torsor of period $2$ under $J$ can be written in the form
		\[
			V_\ell : \left\{
				\begin{array}{c}
					au^2 - mv^2 + nt^2 = 0\\
					bu^2 - mv^2 + mnw^2 = 0
				\end{array}\right\} 
				\subset \PP^3_{(u:v:w:t)}\,,
		\]
		where $\ell = (m,n,mn) \in  K^\times\times K^\times\times K^\times = L^\times$. The algebra $\calA_\ell$ corresponding to $\ell$ is
		\[
			\calA_\ell = (x, m )_2 \otimes (x-a,n)_2 \otimes (x-b, mn)_2
		\]		
		The class of $\calA_\ell$ in $\Br(\kk(X))$ can be written in any of the following equivalent ways:
		\begin{align*}
			\left[\calA_\ell\right] &=  \left[ x, n\right]_2 + \left[ x-a,m\right]_2,\\
					&= \left[ x,mn\right]_2 + \left[ x-b, m \right]_2,\\
					&= \left[ x-a,mn\right]_2 + \left[ x-b,n\right]_2.
		\end{align*} 
		
		Evaluating this class at the nontrivial points in $J[2] \subset J(K)$ we obtain the following elements of $\Br^0(V_\ell/K)$:
		\begin{align*}
			\left[\calA_\ell(0,0)\right] &= \left[-a,mn\right]_2 + \left[ -b,n\right]_2\,,\\
			\left[\calA_\ell(a,0)\right] &= \left[ a, mn \right]_2 + \left[ a-b,m\right]_2\,,\\
			\left[\calA_\ell(b,0)\right] &= \left[ b,n\right]_2 + \left[ b-a,m\right]_2\,.
		\end{align*}
		If $J(K) = J[2]$, then these generate $\Br^0(V_\ell/K)$ and $\Br(V_\ell/K)/\Br^0(V_\ell/K)$ is generated by the class of the conic $C_\ell : bu^2 - mv^2 + mnw^2$, which is $[ C_\ell ]= [ bmn,-n ]_2$.

		For a concrete example we may take $K = \Q$, $a = 16$ and $b = -16$. Then $J(\Q) = J[2]$, and for $\ell = (-11,3,-33)$ we find
		\[
			\Br^0(V_\ell/\Q) = \{ \left[-1,-33\right]_2, \left[2,3\right]_2, \left[-2,-11\right]_2, \left[1,1\right]_2 \}
		\]
		and $[ C_\ell ] = [33,-1]_2 \notin \Br^0(V_\ell/\Q)$. In particular, we recover the result of Cassels \cite{CasselsV} that $\per(V_\ell) \ne \ind(V_\ell)$. 

		Over an algebraically closed field, $2$-torsion points on the Jacobian of a curve give rise to unramified double covers of the curve. In the presence of a nontrivial relative Brauer group this can fail, as shown in the following lemma.

		\begin{Proposition}
			Let $V_\ell/\Q$ be the curve corresponding to $\ell = (-11,3,-33)$ and let $E$ be its Jacobian. There does not exist an unramified double cover of $V_\ell$ defined over $\Q$ (despite the fact that $E(\Q)[2] \simeq \Z/2\Z\times \Z/2\Z$).
		\end{Proposition}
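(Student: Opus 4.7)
The plan is to use Kummer theory on $V_\ell$ to translate the existence of a (geometrically connected) unramified double cover into the question of whether $\Pic(V_\ell)[2]$ is trivial, and then to apply the specialization values of $\calA_\ell$ at $2$-torsion computed in the preceding example. Étale cohomology on $V_\ell$ yields the Kummer exact sequence
\[
0\To \Q^\times/\Q^{\times 2} \To H^1_{\textup{\'et}}(V_\ell,\mu_2)\To \Pic(V_\ell)[2] \To 0,
\]
and classes in the image of $\Q^\times/\Q^{\times 2}$ correspond precisely to the geometrically split double covers $V_\ell\otimes_\Q\Q(\sqrt d)\to V_\ell$. Hence a geometrically connected unramified double cover of $V_\ell$ over $\Q$ exists if and only if $\Pic(V_\ell)[2]\ne 0$.

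Because $\Vbar_\ell$ has genus $1$ we have $\Pic(\Vbar_\ell)[2]=E[2](\Qbar)$, and the hypothesis $E(\Q)[2]\simeq (\Z/2\Z)^2$ forces $E[2]$ to be a trivial $G_\Q$-module, so $\Pic(\Vbar_\ell)^{G_\Q}[2]=E[2](\Q)$. By the basic exact sequence~\eqref{eq:frakaX}, $\Pic(V_\ell)=\ker(\mathfrak{a}_{V_\ell})$ inside $\Pic(\Vbar_\ell)^{G_\Q}$, and therefore
\[
\Pic(V_\ell)[2]\;=\;\ker\bigl(\mathfrak{a}_{V_\ell}\colon E[2](\Q)\to \Br(\Q)\bigr).
\]

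In the elliptic case $X=J=E$, the discussion in Sections~\ref{sec:BrauerClassesViaSpecialization} and~\ref{subsec:EC2} identifies $\calA_{V_\ell}\in \Br(E,0_E)$ with $\calB_{V_\ell}$ and shows that $\calA_\ell$ represents this class, so by Theorem~\ref{thm:SpecializationOfBrauerClasses} one has $\calA_\ell(P)=\mathfrak{a}_{V_\ell}(P)$ for every $P\in E(\Q)$. The preceding example computed these specializations at the three nonzero points of $E[2](\Q)$ as $[-1,-33]_2$, $[2,3]_2$ and $[-2,-11]_2$, and each of these is nonzero in $\Br(\Q)$ (as one verifies by a Hilbert symbol computation at, e.g., $p=3$ for the first two and $p=2$ for the third). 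Hence $\mathfrak{a}_{V_\ell}$ is injective on $E[2](\Q)$, so $\Pic(V_\ell)[2]=0$ and the proposition follows.

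The only delicate step is the identification of $\mathfrak{a}_{V_\ell}|_{E[2](\Q)}$ with the specialization map $P\mapsto \calA_\ell(P)$; once this is in hand, the argument reduces to the elementary Hilbert symbol check already implicit in the example.
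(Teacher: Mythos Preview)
Your proof is correct, and the overall shape is the same as the paper's: reduce to showing that no nontrivial $P\in E[2](\Q)$ lies in $\Pic^0(V_\ell)$, and then cite the specialization computation from the example. The reduction step, however, is genuinely different.

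The paper argues directly on Jacobians: an unramified double cover $\pi:Y\to V_\ell$ produces a nontrivial $P\in J[2](\Q)$ generating $\ker(\pi^*:J\to\Jac(Y))$, and Lemma~\ref{lem:IsoSES}(2) (applied to $\pi$) forces this $P$ to be represented by a $\Q$-rational divisor on $V_\ell$. You instead classify unramified double covers via the Kummer sequence on $V_\ell$, identifying the geometrically connected ones with nonzero classes in $\Pic(V_\ell)[2]$, and then compute this group as $\ker\bigl(\mathfrak a_{V_\ell}|_{E[2](\Q)}\bigr)$. Your route is a bit more self-contained cohomologically and makes explicit why one may restrict attention to geometrically connected covers, whereas the paper's argument is shorter but leans on Lemma~\ref{lem:IsoSES}(2) and leaves the geometric connectedness implicit. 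Your identification $\calA_\ell(P)=\mathfrak a_{V_\ell}(P)$ is exactly what the paper establishes in Sections~\ref{sec:BrauerClassesViaSpecialization}--\ref{sec:HyperellipticCurves} (via Lemma~\ref{lem:Tate=frakaX} and the fact that in the odd case $\calA_\ell\in\Br(X,0_X)$), so the ``delicate step'' you flag is already available.
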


		\begin{proof}
			Suppose $\pi:Y \to V_\ell$ is an unramified double cover. The kernel of $\pi^*:J \to \Jac(Y)$ is generated by some $P \in J[2](\Q)$ with $P \ne 0$. Lemma~\ref{lem:IsoSES}(2) implies that $P$ is represented by a $\Q$-rational divisor on $V_\ell$. However, as shown above, none of the nontrivial elements of $J[2]$ is represented by a $\Q$-rational divisor.
		\end{proof}

		\subsection{A generic torsor of period $2$ and index $4$}
			The presence of rational $2$-torsion in the preceding example simplifies the computations, but it is by no means necessary. We now compute the relative Brauer group of a torsor of period $2$ and index $4$ in an example where there is no rational $2$-torsion, but the Mordell-Weil group of the Jacobian is nontrivial (which has the potential to make the relative Brauer group larger).			
						
			Consider the elliptic curve $X: y^2 = f(x) = x^3 - 1296x + 11664$ labeled 37a in Cremona's Database. Set $L = \Q[\theta]/f(\theta)$ and $\ell = \frac{-11\theta^2 + 322\theta - 1584}{2} \in L^\times$. One can check that $\ell$ has norm $2^4\times 3^6\times 569^2$ and, hence, that $\ell$ gives rise (as described in Section~\ref{subsec:EC}) to a torsor $V_\ell$ of period $2$ under $X$. 
			
			The Mordell-Weil group of $X$ is generated by the point $(0,108)$ of infinite order. 	Carrying out the computations as described in the example of Section~\ref{subsec:ind2} we obtain $[\calA_\ell(P)] = [-1,-1]_2$, which shows that $\Br^0(V_\ell/\Q) \simeq \Z/2\Z$. On the other hand, the conic $C_\ell$ is defined by the vanishing of
			\[
				Q^{(2)}_\ell = 11u^2 - 644uv + 31680uw + 15840v^2 - 1091232vw + 24284448w^2\,,
			\]
			whose class in $\Br(\Q)$ is $[ -3,569]_2$, which ramifies at $2$ and $569$. In particular it is not equal to $[-1,-1]_2$. Thus $\Br(V_\ell/\Q) \simeq \Z/2\Z \times \Z/2\Z$ is strictly larger than $\Br^0(V_\ell/\Q)$, which shows that $V_\ell$ has index $4$.
			
		\subsection{A torsor with period and index $4$}\label{subsec:period4}
		
			Consider the genus one hyperelliptic curve $X/\Q$ defined by $y^2=  f(x)$, where $f(x) = 5x^4 + 3x^2 + x + 3.$ Set $L = \Q[\theta]/f(x)$. The element $\ell = 10\theta^3 -\theta^2 + 8\theta - 1 \in L^\times$ has norm $N_{L/\Q}(\ell) = 3920 \in 5\Q^{\times 2}$. So, as described in Section~\ref{subsec:G1}, $\ell$ corresponds to a torsor $V_\ell$ of index dividing $4$ under the Jacobian $J$ of $X$, which is the elliptic curve $J : y^2 = x^3 -63x -113$ with $J(\Q)$ free of rank $2$ generated by $P_1 = (-3,7)$ and $P_2 = (-6,7)$.
			
			The curve $X$ has no points over $\Q_5$, and so $X$ has period $2$. Since $2V_\ell = X$ in the Weil-Ch\^atelet group, $V_\ell$ has period and index $4$. We will use Theorem~\ref{thm:index4} to compute the relative Brauer group of $V_\ell$ and determine which $\Q$-rational points on the Jacobian are represented by $\Q$-rational divisors on $V_\ell$. Specifically we will show:
			\begin{Example}
			 	$\Br(V_\ell/\Q)$ is generated by $[5,7]_2 \in \Br(\Q)$ and a point $P \in J(\Q)$ is represented by a $\Q$-rational divisor on $V_\ell$ if and only if its image in $J(\Q)/2J(\Q)$ lies in the subgroup generated by $P_1$.
			\end{Example}

			While $X$ has no $\Q_5$-points, it is locally solvable at all primes not equal to $5$ and over the reals. It follows that $\Br(X/\Q) = 0$. Hence the points $P_1$ and $P_2$ must be represented by $\Q$-rational divisors on $X$. The group of such divisors are generated by differences of traces of $K$-rational points on $X$ as $K$ ranges over quadratic extensions of $\Q$. A naive search over small quadratic fields finds the $\Q(\sqrt{3})$-points
			\begin{align*}
				Q_0 &= \left(0,\sqrt{3}\right), \quad
				Q_1 = \left(\frac{1+\sqrt{3}}{2} , -\frac{7+2\sqrt{3}}{2}\right), \text{ and} \\
				Q_2 &= \left(\frac{34+19\sqrt{3}}{2} , \frac{5011+ 2888\sqrt{3}}{4}\right)\,.
			\end{align*}
			Let $\sigma$ denote the generator of $\Gal(\Q(\sqrt{3})/\Q)$. Then 
			\begin{align*}
				D_1 &= (Q_1 + \sigma(Q_1)) - (Q_0 + \sigma(Q_0)), \text{ and}\\
				D_2 &= (Q_2 + \sigma(Q_2)) - (Q_0 + \sigma(Q_0))
			\end{align*}
			are $\Q$-rational divisors of degree $0$ that represent $P_1, P_2 \in J(\Q) = \Pic^0(\Xbar)^{\Gal(\Q)}$.
			
			By Theorem~\ref{thm:index4} we have an exact sequence
			\[
				0 \to \Pic^0(V_\ell) \to \Pic^0(X) \stackrel{\calA_\ell}\to \Br(V_\ell/\Q) \to 0\,,
			\]
			where $\calA_\ell = \Cor_{\kk(X_L)/\kk(X)}(x-\alpha,\ell)_2$. Thus $\Br(V_\ell/\Q)$ is generated by
			$\calA_\ell([D_i])$ for $i = 1,2$. Since the $x$-coordinates of $Q_0$ and $\sigma(Q_0)$ are the same, $(x-\alpha)([D_i])$ is obtained by evaluating the minimal polynomial over $\Q$ of the $x$-coordinate of $Q_i$ at $\theta$. Thus we obtain
			\begin{align*}
				(x-\alpha)([D_1]) &= \theta^2 - \theta - 1/2\,,\\
				(x-\alpha)([D_2]) &= \theta^2 - 34\theta + 73/4\,.
			\end{align*}
			Using Rosset-Tate reciprocity (as described in \cite[Corollary 7.4.10 and Remark 7.4.12]{GS-csa}) we find that $\calA_\ell([D_1]) \equiv \sum_{i=1}^4\calB_i$ and $\calA_\ell([D_2]) \equiv \sum_{i=1}^4\calC_i$, where
			\begin{align*}
				\calB_1 &= [-483,-129]_2\\
				\calB_2 &= [-3,-31]_2\\
				\calB_3 &= [-401730,-645]_2\\
				\calB_4 &= [-72345,-155]_2\\
				\calC_1 &= [14717515766140493089374, 54870144093800907126]_2\\
				\calC_2 &= [-89459567484222,-94665799875986]_2\\
				\calC_3 &= [1051000391530337870055,274350720469004535630]_2\\
				\calC_4 &= [-6948230469054914235,-473328999379930]_2\,.
			\end{align*}
			 Computing Hilbert symbols we find that $\calA_\ell([D_1])$ is split, while $\calA_\ell([D_2])$ is Brauer equivalent to $[5,7]_2$, which ramifies only at $5$ and $7$.
			
			\begin{Remark}
				The torsor $V_\ell$ has points over $\Q_p$ for all primes $p$ other than $5$ and $7$. So in fact, we knew a priori that the relative Brauer group would be contained in the subgroup generated by $[5,7]_2$. We take this as strong evidence of the correctness of our computations.
			\end{Remark}

		\subsection{Relative Brauer groups of higher dimensional torsors}
		
			As a final example we compute $\Br^0(V_\ell/\Q)$ for a torsor of period $2$ under the Jacobian of a hyperelliptic curve of genus $2$. Let $X/\Q$ be the hyperelliptic curve defined by
			\[
				X: y^2 = f(x) = x^6 + x^5 + x^4 + x^3 + x^2 + x + 1\,,
			\]
			and let $J$ be its Jacobian. Using Magma \cite{magma} we compute that $J(\Q) \simeq \Z^2$ generated by divisors $P_1 = D_1 - \pi^*\infty$ and $P_2 = D_2 - \pi^*\infty$, where $D_1$ and $D_2$ are divisors on $X$ whose $x$-coordinates are the roots of $t^2+t+1$ and $t^2 - t + 1$, and $\pi^*\infty$ is the pull-back of $\infty \in \PP^1$ under the map $\pi : X \to \PP^1$ sending a point on $X$ to its $x$-coordinate.
			
			For any $\ell \in L = \Q[\theta]/f(\theta)$ of square norm we can compute $\calA_\ell$, and as described in the previous example, $\calA_\ell(P_1) = \Cor_{L/\Q}(\theta^2+\theta+1,\ell)$ and $\calA_\ell(P_2) = \Cor_{L/\Q}(\theta^2-\theta+1,\ell)$. Moreover, the classes of these algebras in $\Br(\Q)$ generate $\Br^0(V_\ell/\Q)$, by Theorem~\ref{thm:highergenus}. 
			
			For a concrete example, take $\ell = 3\theta^4 + 2\theta^2 + 2\theta + 3$ which has norm $43^2$. Then $\calA_\ell(P_1)$ ramifies at $2$ and $43$, while $\calA_\ell(P_2)$ is split. In particular, $\Br^0(V_\ell/\Q) \simeq \Z/2\Z$ and a rational point on $J$ is represented by a $\Q$-rational divisor on $V_\ell$ if and only if its image in $J(\Q)/2J(\Q)$ lies in the subgroup generated by $P_2$.

\begin{appendices}
	\edit{
	\section{Appendix}
		Let $E:y^2 = f(x)$ be an elliptic curve defined over a field $K$ of characteristic not equal to $2$, where $f(x) \in K[x]$ is a monic polynomial of degree $3$ and set $L = K[x]/f(x)$. The Weil pairing gives an isomorphism $E[2] \simeq \Hom(E[2],\mu_2) = \ker\left(\Norm_{L/K}:\mu_2(\Lbar)\to\mu_2(\Kbar)\right)$, thus inducing an isomorphism $e:\HH^1(K,E[2]) \simeq \mathfrak{L}_1 := \ker\left(\Norm_{L/K}:L^\times/L^{\times 2} \to K^\times/K^{\times 2}\right)$. Composing $e$ with the connecting homomorphism $\delta$ yields Cassel's $x-\theta$ map (see \cite[Theorems 1.1 and 1.2]{Schaefer}). This gives the following commutative and exact diagram.
		
		\[
			\xymatrix{
				0 \ar[r] & \frac{E(K)}{2E(K)} \ar[drr]_{x-\theta} \ar[rr]^\delta && \HH^1(K,E[2]) \ar@{=}[d]^e \ar[r]& \HH^1(K,E)[2] \ar[r] & 0\\
				&&& \mathfrak{L}_1 \ar[ur]_v 
			}
		\]

	\begin{Theorem}\label{thm:Appendix}
		 The map $v$ in the diagram above sends the class of $\ell \in L^\times$ to the class of the torsor $V_\ell$ defined by \eqref{eq:V_ell}.
	\end{Theorem}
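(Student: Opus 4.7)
The strategy is to show that the geometric construction in~\eqref{eq:V_ell} realizes the standard 2-covering of $E$ attached to $\bar\ell \in \mathfrak{L}_1$ via the Kummer sequence, so that $[V_\ell] = v(\bar\ell)$ in $\HH^1(K,E)[2]$.

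First I would verify that the assignment $\bar\ell \mapsto V_\ell$ is well-defined on $\mathfrak{L}_1 \cong L^\times/L^{\times 2}$: replacing $\ell$ by $\lambda^2 \ell$ for $\lambda \in L^\times$ amounts to the $K$-linear change of coordinates $u + \theta v + \theta^2 w \mapsto \lambda(u + \theta v + \theta^2 w)$, producing an isomorphism $V_{\lambda^2 \ell} \simeq V_\ell$ over $K$. Next, a classical invariant-theoretic calculation identifies $\Jac(V_\ell) \simeq E$ (the cubic $f$ appears as the discriminant of the pencil $\mu(Q_\ell^{(1)} + t^2) + \nu Q_\ell^{(2)}$), and the projection from $(0:0:0:1) \in \PP^3$ exhibits $V_\ell$ as a degree-$2$ cover of the conic $C_\ell$, yielding a $K$-rational divisor class of degree $2$ on $V_\ell$ and placing $[V_\ell]$ in $\HH^1(K,E)[2]$.

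The main step is to match $[V_\ell]$ with $v(\bar\ell)$. I would first observe that the map vanishes on the image of $x - \theta$: for $P = (x_0, y_0) \in E(K)$ not a $2$-torsion point, one has $\ell = x_0 - \theta$ with coordinates $(x_0, -1, 0)$ in the basis $1, \theta, \theta^2$ of $L$, so evaluating at $(u,v,w) = (1,0,0)$ gives $Q_\ell^{(1)}(1,0,0) = -1$ and $Q_\ell^{(2)}(1,0,0) = 0$, whence $(1:0:0:1) \in V_\ell(K)$. The $2$-torsion case requires the Schaefer-modified formula for $x - \theta$ but is similar. After checking that $\bar\ell \mapsto [V_\ell]$ is a group homomorphism (either by a direct cocycle computation or by comparison with the $2$-descent construction), this factorization shows $\bar\ell \mapsto [V_\ell]$ descends through $\mathfrak{L}_1/\operatorname{im}(x-\theta) \stackrel{\bar v}\to \HH^1(K,E)[2]$. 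It then remains to verify that the induced map is the identity, which can be done by an explicit cocycle: choosing $m \in \Lbar^\times$ with $m^2 = \ell$ normalized so that $\sigma \mapsto \sigma(m)/m$ represents $e^{-1}(\bar\ell) \in \HH^1(K,E[2])$, the $\Kbar$-isomorphism $\phi_m: V_{\ell,\Kbar} \to E_{\Kbar}$ built from the factorization $\ell(u + \theta v + \theta^2 w)^2 = (m(u + \theta v + \theta^2 w))^2$ satisfies $\sigma(\phi_m) = \phi_m \circ \tau_{\sigma(m)/m}$, where $\tau$ denotes translation on $E_\Kbar$, yielding $[V_\ell] = v(\bar\ell)$.

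The principal obstacle is making the isomorphism $\phi_m$ completely explicit so that the cocycle comparison can be carried out cleanly. If this becomes unwieldy, a less self-contained but conceptually cleaner route is to cite~\cite[Section 15]{CasselsLectureNotes} or~\cite{CremonaFisher}, where the correspondence between $\mathfrak{L}_1$ and $2$-coverings of $E$ is classically established via the construction in~\eqref{eq:V_ell}.
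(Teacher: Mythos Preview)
Your ultimate strategy---choose $m$ with $m^2=\ell$, build a $\Kbar$-isomorphism $V_{\ell,\Kbar}\to V_{1,\Kbar}\simeq E_\Kbar$ via the substitution $u+\theta v+\theta^2 w \mapsto m(u+\theta v+\theta^2 w)$, and read off the twisting cocycle $\sigma(m)/m$---is exactly the paper's approach, and the paper supplies the device that resolves the obstacle you identify. Rather than working with the $(u,v,w,t)$-model directly, the paper introduces an auxiliary curve $D_\ell\subset\PP^3\times E$ cut out (in coordinates $(u_0{:}u_1{:}u_2{:}u_3)$ indexed by the Weierstrass points $0_E,\omega_1,\omega_2,\omega_3$) by $u_0^2(x-x(\omega_i)) = \ell_{\omega_i}u_i^2$ for $i=1,2,3$ together with $y = a\,u_1u_2u_3$, where $a^2 = N_{L/K}(\ell)$. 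Projection to $\PP^3$ identifies $D_\ell$ with $V_\ell$, while projection to $E$ is visibly \'etale of degree $4$; for $\ell = 1$ one checks it is an isogeny with non-cyclic kernel, hence $[2]:E\to E$. Thus $\pi_\ell:D_\ell\to E$ is the twist of $[2]$ by the cocycle $\sigma\mapsto\sigma(\alpha)/\alpha$ with $\alpha^2=\ell$, which is the class $e^{-1}(\bar\ell)$.

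Your intermediate steps are detours: once the cocycle is computed directly, the homomorphism property and the vanishing on the image of $x-\theta$ follow for free. As written, your homomorphism check is also circular---``comparison with the $2$-descent construction'' is the theorem itself, and a ``direct cocycle computation'' is already your final step. One further point your sketch leaves implicit, and which the $D_\ell$ model clarifies, is why the automorphism of $V_1\simeq E$ given by scaling by $\sigma(m)/m \in \mu_2(\Lbar)$ coincides with translation by the element of $E[2]$ that the Weil-pairing identification assigns to it: in the paper's setup this automorphism commutes with the projection to $E$, hence is a deck transformation of $[2]:E\to E$, hence translation by a $2$-torsion point.
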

	
	\begin{proof}
		We will avail ourselves of the notation introduced at the beginning of Section~\ref{sec:HyperellipticCurves}. Denote the elements of $\Omega$ by $0_E, \omega_1,\omega_2,\omega_3$. Given $\ell \in L^\times \subset L_\circ^\times = \Map_K(\Omega,\Kbar^\times)$ such that $N_{L/K}(\ell) = a^2$, define $D_\ell \subset \PP^3\times E$ by declaring that $\left((u_0:u_1:u_2:u_3), (x,y)\right) \in D_\ell$ if and only if
		\[
			u_0^2(x-x(\omega_i)) = \ell_{\omega_i} u_i^2 \text{, for $i = 1,2,3$, and } y = a u_1 u_2 u_3\,.
		\]
		(Here we are using $\ell_{\omega_i}$ to denote the value of $\ell \in \Map(\Omega,\Kbar^\times)$ at $\omega_i \in \Omega$.) This condition is invariant under the action of $G_K$, so that $D_\ell$ is defined over $K$. Projection onto the first factor gives an isomorphism of $D_\ell$ and $V_\ell$. Since $V_\ell$ is a complete intersection, $D_\ell$ is geometrically connected. Projection onto the second factor defines a morphism $\pi_\ell:D_\ell \to E$ which one checks is \'etale of degree $4$. This implies that $D_\ell$ is a smooth curve of genus $1$.
		
		Any $\alpha \in \Lbar^\times$ gives an automorphism $(u_0:u_1:u_2:u_3) \mapsto (\alpha_{0_E}u_0:\alpha_{\omega_1}u_1:\alpha_{\omega_2}u_2: \alpha_{\omega_3}u_3)$ of $\overline{\PP}^3$. If $\alpha^2 = \ell$, then this induces an isomorphism of $E$-schemes $\overline{D}_\ell \simeq \overline{D}_1$. The projection $D$ of $D_1$ onto $\PP^3$ contains the $K$-rational point $0_D := (0:1:1:1)$ which lies in the fiber above $0_E$ under the composition $\pi: D \simeq D_1 \to E$. Thus $\pi:(D,0_D) \to (E,0_E)$ is a degree $4$ isogeny of elliptic curves. The points in the kernel of this isogeny all lie on the hyperplane $u_0 = 0$. Since $4.0_D$ is also a hyperplane section, the points in the kernel sum to $0_D$. This shows that the kernel of $\pi$ is not cyclic. Thus the only possibility is that $D \simeq E$ and $\pi$ is multiplication by $2$.
		
		We have shown that $\pi_\ell:D_\ell \to E$ is the twist of $[2]:E \to E$ by the cocycle $\xi_\ell : \sigma \mapsto \sigma(\alpha)/\alpha$, where $\alpha^2 = \ell$.  The cocycle $\xi_\ell$ takes values in $\ker\left(\Norm_{L/K}:\mu_2(\Lbar)\to\mu_2(\Kbar)\right) \simeq E[2]$ and the image of its class under $e$ is the class of $\ell$ in $\mathfrak{L}_1$. On the other hand the isomorphism $\overline{D}_\ell \simeq \overline{D}_1 \simeq \overline{E}$ endows $D_\ell$ with the structure of an $E$-torsor over $K$ and the image of $\xi_\ell$ in $\HH^1(K,E)[2]$ is represented by $D_\ell \simeq V_\ell$.
		\end{proof}
	}
\end{appendices}


	\begin{bibdiv}
	\begin{biblist}

\bib{BGW}{article}{
   author={Bhargava, Manjul},
   author={Gross, Benedict},
   author={Wang, Xiaoheng},
   title={Pencils of quadrics and the arithmetic of hyperelliptic curves},
   eprint={arXiv:1310.7692},
}

\bib{magma}{article}{
   author={Bosma, Wieb},
   author={Cannon, John},
   author={Playoust, Catherine},
   title={The Magma algebra system. I. The user language},
   note={Computational algebra and number theory (London, 1993)},
   journal={J. Symbolic Comput.},
   volume={24},
   date={1997},
   number={3-4},
   pages={235--265},
   issn={0747-7171},
}


\bib{CasselsV}{article}{
   author={Cassels, J. W. S.},
   title={Arithmetic on curves of genus 1. V. Two counterexamples},
   journal={J. London Math. Soc.},
   volume={38},
   date={1963},
   pages={244--248},
   issn={0024-6107},
}

\bib{CasselsGenus2}{article}{
   author={Cassels, J. W. S.},
   title={The Mordell-Weil group of curves of genus $2$},
   conference={
      title={Arithmetic and geometry, Vol. I},
   },
   book={
      series={Progr. Math.},
      volume={35},
      publisher={Birkh\"auser, Boston, Mass.},
   },
   date={1983},
   pages={27--60},
}

\bib{CasselsLectureNotes}{book}{
   author={Cassels, J. W. S.},
   title={Lectures on elliptic curves},
   series={London Mathematical Society Student Texts},
   volume={24},
   publisher={Cambridge University Press, Cambridge},
   date={1991},
   pages={vi+137},
   isbn={0-521-41517-9},
   isbn={0-521-42530-1},
}

\bib{CipKrash}{article}{
   author={\c Ciperiani, Mirela},
   author={Krashen, Daniel},
   title={Relative Brauer groups of genus 1 curves},
   journal={Israel J. Math.},
   volume={192},
   date={2012},
   number={2},
   pages={921--949},
   issn={0021-2172},
}

\bib{C-TS}{article}{
   author={Colliot-Th{\'e}l{\`e}ne, Jean-Louis},
   author={Sansuc, Jean-Jacques},
   title={La $R$-\'equivalence sur les tores},
   language={French},
   journal={Ann. Sci. \'Ecole Norm. Sup. (4)},
   volume={10},
   date={1977},
   number={2},
   pages={175--229},
   issn={0012-9593},
}

\bib{Cremona}{article}{
   author={Cremona, J. E.},
   title={Classical invariants and 2-descent on elliptic curves},
   note={Computational algebra and number theory (Milwaukee, WI, 1996)},
   journal={J. Symbolic Comput.},
   volume={31},
   date={2001},
   number={1-2},
   pages={71--87},
   issn={0747-7171},
}

\bib{CremonaFisher}{article}{
   author={Cremona, J. E.},
   author={Fisher, T. A.},
   title={On the equivalence of binary quartics},
   journal={J. Symbolic Comput.},
   volume={44},
   date={2009},
   number={6},
   pages={673--682},
   issn={0747-7171},
}

\bib{CreutzANTSX}{article}{
   author={Creutz, Brendan},
   title={Explicit descent in the Picard group of a cyclic cover of the
   projective line},
   conference={
      title={ANTS X---Proceedings of the Tenth Algorithmic Number Theory
      Symposium},
   },
   book={
      series={Open Book Ser.},
      volume={1},
      publisher={Math. Sci. Publ., Berkeley, CA},
   },
   date={2013},
   pages={295--315},
}

\bib{CVCurves}{article}{
   author={Creutz, Brendan},
   author={Viray, Bianca},
   title={Two torsion in the Brauer group of a hyperelliptic curve},
   journal={Manuscripta Math.},
   volume={147},
   date={2015},
   number={1-2},
   pages={139--167},
   issn={0025-2611},
}

\bib{Fisher4d}{article}{
   author={Fisher, Tom},
   title={Some improvements to 4-descent on an elliptic curve},
   conference={
      title={Algorithmic number theory},
   },
   book={
      series={Lecture Notes in Comput. Sci.},
      volume={5011},
      publisher={Springer, Berlin},
   },
   date={2008},
   pages={125--138},
}

\bib{FTVL}{article}{
   author={Flynn, E. Victor},
   author={Testa, Damiano},
   author={van Luijk, Ronald},
   title={Two-coverings of Jacobians of curves of genus 2},
   journal={Proc. Lond. Math. Soc. (3)},
   volume={104},
   date={2012},
   number={2},
   pages={387--429},
   issn={0024-6115},
}

\bib{GS-csa}{book}{
   author={Gille, Philippe},
   author={Szamuely, Tam{\'a}s},
   title={Central simple algebras and Galois cohomology},
   series={Cambridge Studies in Advanced Mathematics},
   volume={101},
   publisher={Cambridge University Press, Cambridge},
   date={2006},
   pages={xii+343},
   isbn={978-0-521-86103-8},
   isbn={0-521-86103-9},
}

\bib{Haile}{article}{
   author={Haile, Darrell E.},
   title={On the Clifford algebra of a binary cubic form},
   journal={Amer. J. Math.},
   volume={106},
   date={1984},
   number={6},
   pages={1269--1280},
   issn={0002-9327},
}

\bib{HaileHan}{article}{
   author={Haile, Darrell},
   author={Han, Ilseop},
   title={On an algebra determined by a quartic curve of genus one},
   journal={J. Algebra},
   volume={313},
   date={2007},
   number={2},
   pages={811--823},
   issn={0021-8693},
}

\bib{HaileHanWadsworth}{article}{
   author={Haile, Darrell E.},
   author={Han, Ilseop},
   author={Wadsworth, Adrian R.},
   title={Curves $\calC$ that are cyclic twists of $Y^2=X^3+c$ and
   the relative Brauer groups $Br(k(\calC)/k)$},
   journal={Trans. Amer. Math. Soc.},
   volume={364},
   date={2012},
   number={9},
   pages={4875--4908},
   issn={0002-9947},
}

\bib{Han}{article}{
   author={Han, Ilseop},
   title={Relative Brauer groups of function fields of curves of genus one},
   journal={Comm. Algebra},
   volume={31},
   date={2003},
   number={9},
   pages={4301--4328},
   issn={0092-7872},
}

\bib{Kuo}{article}{
   author={Kuo, Jung-Miao},
   title={On an algebra associated to a ternary cubic curve},
   journal={J. Algebra},
   volume={330},
   date={2011},
   pages={86--102},
   issn={0021-8693},
}

\bib{Lichtenbaum69}{article}{
   author={Lichtenbaum, Stephen},
   title={Duality theorems for curves over $p$-adic fields},
   journal={Invent. Math.},
   volume={7},
   date={1969},
   pages={120--136},
   issn={0020-9910},
}

\bib{MSS4d}{article}{
   author={Merriman, J. R.},
   author={Siksek, S.},
   author={Smart, N. P.},
   title={Explicit $4$-descents on an elliptic curve},
   journal={Acta Arith.},
   volume={77},
   date={1996},
   number={4},
   pages={385--404},
   issn={0065-1036},
}

\bib{PoonenSchaefer}{article}{
   author={Poonen, Bjorn},
   author={Schaefer, Edward F.},
   title={Explicit descent for Jacobians of cyclic covers of the projective
   line},
   journal={J. Reine Angew. Math.},
   volume={488},
   date={1997},
   pages={141--188},
   issn={0075-4102},
}

\bib{Schaefer}{article}{
   author={Schaefer, Edward F.},
   title={$2$-descent on the Jacobians of hyperelliptic curves},
   journal={J. Number Theory},
   volume={51},
   date={1995},
   number={2},
   pages={219--232},
   issn={0022-314X},
}

\bib{Shick}{article}{
   author={Shick, Jonathan},
   title={On Witt-kernels of function fields of curves},
   conference={
      title={Recent advances in real algebraic geometry and quadratic forms
      },
      address={Berkeley, CA, 1990/1991; San Francisco, CA},
      date={1991},
   },
   book={
      series={Contemp. Math.},
      volume={155},
      publisher={Amer. Math. Soc., Providence, RI},
   },
   date={1994},
   pages={389--398},
}

\bib{Weil}{article}{
   author={Weil, Andr{\'e}},
   title={The field of definition of a variety},
   journal={Amer. J. Math.},
   volume={78},
   date={1956},
   pages={509--524},
   issn={0002-9327},
}

	\end{biblist}
	\end{bibdiv}
	
\end{document}